\newcommand{\Pb}{\mathbb{P}}
\newcommand{\dx}{\mathrm{d}}
\newcommand{\R}{\mathbb{R}}
\newcommand{\Z}{\mathbb{Z}}
\newcommand{\GUE}{\mathrm{GUE}}
\newcommand{\GOE}{\mathrm{GOE}}
\newtheorem{tthm}{Theorem}
\newtheorem{prop}{Proposition}[section]
\newtheorem{lem}[prop]{Lemma}
\newtheorem{defin}[prop]{Definition}
\newtheorem{cor}{Corollary}
\newtheorem{rem}[prop]{Remark}
\theoremstyle{definition}
\newcommand{\blocktheorem}[1]{%
  \csletcs{old#1}{#1}
  \csletcs{endold#1}{end#1}
  \RenewDocumentEnvironment{#1}{o}
    {\par\addvspace{1.5ex}
     \noindent\begin{minipage}{\textwidth}
     \IfNoValueTF{##1}
       {\csuse{old#1}}
       {\csuse{old#1}[##1]}}
    {\csuse{endold#1}
     \end{minipage}
     \par\addvspace{1.5ex}}
}
\author{P. Nejjar\thanks{Institute for Applied Mathematics, Bonn University, Endenicher Allee 60, 53115 Bonn, Germany. E-mail: {\tt nejjar@iam.uni-bonn.de}.
}}
\begin{document}

\title{KPZ statistics of second class particles   in ASEP via mixing}

\date{}

\maketitle 
\begin{abstract}
We consider the asymmetric simple exclusion process (ASEP) on $\Z$ with a single second class particle initially at the origin.
The first class particles form two rarefaction fans which come together at the origin,  where the large time density jumps from $0$ to $1$.
We are interested in $X(t)$, the position of the second class particle at time $t$.
We show that, under the KPZ $1/3$ scaling, $X(t)$ is asymptotically distributed as the difference of two independent, $\GUE$-distributed random variables. 
The key part of the proof is to show that $X(t)$ equals, up to a negligible term, the difference of a random number of holes  and  particles, with the randomness 
built up by ASEP itself.  This provides a KPZ analogue to the 1994  result of  Ferrari and Fontes \cite{FF94b}, where this randomness comes from the initial data and  leads to Gaussian limit laws.
\end{abstract}
\section{Introduction}

One of the most basic PDEs is the Burgers equation for $u(\xi,\tau) \in \R$ 
\begin{equation}\label{BE}
\partial_{\xi}u=-\theta\partial_{\tau}[u(1-u)], \quad \theta \in \R_{+}.
\end{equation}
As is standard in PDE theory, the equation \eqref{BE} may be solved by the method of characteristics. This may produce several solutions, but uniqueness comes if one additionally imposes an entropy condition \cite[Chapter 3]{Ev10}.

It is this entropy solution which is picked by the asymmetric simple exclusion process (ASEP): This is a continuous-time Markov process on $\{0,1\}^{\Z},$ we think of the $1'$s as particles and of the $0'$s as holes. The particles perform  independent nearest neighbor random walks, each particle jumps with probability $p>1/2$ a unit step to the right, and with probability $q=1-p<1/2$ a unit step to the left. However, particles can only jump to a site that is occupied by a hole. We can also think of the holes performing independent random walks, jumping to the right with probability $q$, to the left with probability $p$, and only being allowed to jump to sites occupied by a particle. This is the particle-hole duality. When $p=1$, we speak of the totally ASEP (TASEP).

Given a sequence of initial data $\zeta^{N}\in \{0,1\}^{\Z},N\geq 1,$  and $a<b,$ assume that 
\begin{equation*}
\lim_{N\to\infty}\frac{\#\{\mathrm{\,particles\,of\,}\zeta^{N}\mathrm{\, in \,}[aN,bN]\}}{N} =\int_{a}^{b}\dx \xi u_{0}(\xi).
\end{equation*}
Then, with $\zeta^{N}_{\tau N}$ being the state of the ASEP started from $\zeta^{N}$ at time $\tau N$, we have
\begin{equation}\label{hydro}
\lim_{N\to\infty}\frac{\#\{\mathrm{\,particles\,of\,}\zeta^{N}_{\tau N}\mathrm{\, in \,}[aN,bN]\}}{N}= \int_{a}^{b}\dx \xi u(\xi,\tau),
\end{equation}
where $u(\xi,\tau)$ is the entropy solution of \eqref{BE} with initial data $u_{0}$ and $\theta=p-q$.

Next to the convergence \eqref{hydro}, ASEP is also related to the Burgers equation by providing a microscopic analogue of the characteristics which carry the solution of the Burgers equation; this analogue is the second class particle.
This analogy is explained e.g. in \cite{F18}, \cite{Sep00}.
To define the second class particle, we can amend the state space of ASEP to equal $\{0,1,2\}^{\Z},$ where for $\zeta\in  \{0,1,2\}^{\Z},$ having  $\zeta(j)=2$ means that there is a second class particle at site $j$. We will only consider ASEPs with a single second class particle, and its dynamics are as follows: the second class particle interacts with holes like a particle, and with particles like hole. To distinguish the two types of particles, we say that if $\zeta(j)=1,$ $j$ is occupied by a first class particle.
Now when two characteristics meet, a shock (discontinuity) is created in the Burgers equation. If the Burgers equation has a shock wave that starts at the origin at time $0$ and propagates in time, then a second 
class particle initially placed at the origin will follow the shock wave. Being a microscopic object, it is of interest to study the fluctuations of the second class particle around the shock wave.

The main contribution of this paper is to characterize these fluctuations for a shock wave created by deterministic initial data. 
ASEP, when formulated as a growth model, belongs to the Kardar-Parisi-Zhang (KPZ) universality class, see \cite{Cor11} for an introduction to this class of models.
The KPZ class is conjectured to be governed by universal scaling exponents - $1/3$ for fluctuations, $2/3$ for correlations - and limit laws originating in random matrix theory.
There is a wealth of experimental evidence that this conjectural behavior soundly describes various growth phenomena,  we refer to the review article \cite{K17} as well as the research papers  \cite{TS12},\cite{TSSS11},  \cite{BDLYY13}.

As is shown by both experiments and mathematical results, the concrete limit law which arises in the KPZ class  depends on a few subclasses.
 Furthermore, if an additional source of randomness is present in a KPZ growth model (e.g. in the initial data),  it may well be that  this additional randomness
supersedes the randomness built up by the growth mechanism itself.  The absence of randomness in the initial data we consider  means that precisely this does not happen, i.e. by universality considerations  we expect the appearance of a $1/3$ fluctuation exponent and of random matrix limit laws.
Our main result, Theorem \ref{MAIN}, gives the first example of KPZ fluctuations of the second class particle  at shocks for ASEP with general asymmetry $p$.

Beyond the  convergence in distribution, we show that the rescaled position of the second class particle is asymptotically equal to the difference of a  random  number of holes and  particles, see Theorem  \ref{couplthm} in Section \ref{heur}.
We consider Theorem \ref{couplthm} to be the main conceptual novelty in this paper, and the bulk of the work in this paper is devoted to it.

For TASEP, several results for KPZ fluctuations of the second class particle at shocks have been obtained. Furthermore, in the general asymmetric case, but with a shock created by random initial data, the fluctuations of the second class particle (and much more) have been obtained: The second class particle then 
is determined by the random difference of the number of holes  and the number of particles present initially in a fixed segment.
 We can think of this paper as providing a KPZ counterpart to the Gaussian limit laws coming from the initial data.
We postpone the discussion of  previous results and how they relate to ours to Section \ref{previous}, and describe now our main result.

We will consider the ASEP $(\eta_{\ell})_{ \ell\geq 0}$ which has a single second class particle at the origin initially, i.e.
\begin{equation}
\eta_{0}(0)=2,
\end{equation}
and has first class particles starting from 
\begin{equation}\label{fc}
x_{n}(0)=
\begin{cases}
-n-\lfloor (p-q)(t- C t^{1/2})\rfloor    &\mathrm{if} \,  n \geq 1 \\
-n+1  &\mathrm{if}\, -\lfloor (p-q)(t  - C t^{1/2})\rfloor +1 \leq n \leq 0,
\end{cases}
\end{equation}
where $C\in \R$. To make the dependance of  $(\eta_{\ell})_{ \ell\geq 0}$ from $C,t$ clear we could write $(\eta_{\ell}^{t,C})_{ \ell\geq 0},$ but omit doing this to lighten the notation.
We will consider $(\eta_{\ell},0\leq \ell\leq t)$ and let $t$ go to infinity.
 In short, we have
\begin{equation}\label{IC}
\eta_{0}=2\mathbf{1}_{\{0\}}+\mathbf{1}_{\{x_{n}(0),n \geq- \lfloor (p-q)(t  - C t^{1/2})\rfloor +1\}}.
\end{equation}

The density profile of  \eqref{IC} thus has two macroscopic regions where the density is one. These two regions form a rarefaction fan, and at time $t$ these two fans come together for the first time. Thus a shock at the origin is created, where the density 
jumps from $0$ to $1$.  See  Figure \ref{one} for an illustration. We call this situation a hard shock, the fluctuations of a first class particle at the hard shock have been studied previously in \cite{N19}.

We denote by $X(\ell)$ the position the second class particle of $\eta_{\ell}$, and by $x_{n}(\ell)$ the position at time $\ell$ of the particle that started from $x_{n}(0)$.
In the following, we will not always write the integer parts.

We will consider the value  for the constant $C$
\begin{equation}\label{C}
C=C(M)=2\sqrt{\frac{M}{p-q}}, \quad M\in \Z_{\geq1}
\end{equation}
and  let $M$ go to infinity. Sending $M$ to infinity corresponds to soften the shock at the origin: As $M$ gets larger, more particles coming from the left arrive at the origin, and more particles starting close to the origin depart from it.
This softening is invisible on the hydrodynamic scale, but it is seen on the fluctuation level.

\begin{figure}
 \begin{center}
   \begin{tikzpicture}
       \draw (0.4,0.7) node[anchor=south]{\small{$1$}};
  \draw [very thick, ->] (0,-0.5) -- (0,2.5);
    \draw (-0.1,2) node[anchor=east] {\small{$u_{0}(\xi)$}};
    \draw[red,thick ] (-2,1.3) -- (-1,1.3);
      \draw[thick,red] (0,1.3) -- (1,1.3);
\draw[very thick] (-1,-0.1)--(-1,0.1)  ;
  \draw (-1,-0.6) node[anchor=south]{\small{$q-p$}};
  \draw[very thick] (1,-0.1)--(1,0.1)  ;
  \draw (1,-0.6) node[anchor=south]{\small{$p-q$}};
\filldraw(0,1.3) circle(0.08cm);
   \draw [very thick, ->] (-2,0) -- (2,0) node[below=4pt] {\small{$\xi$}};

\begin{scope}[xshift=7cm]
 \draw [very thick,->] (-2,0) -- (2,0) node[below=4pt] {\small{$\xi$}};
  \draw [very thick,->] (0,-0.5) -- (0,2.5);
    \draw (0.3,1.3) node[anchor=south]{\small{$1$}};
    \draw[red,thick] (-2,1.3) -- (0,0);
    \draw[red,thick] (0,1.3)--(2,0);
    \draw[very thick] (-1,-0.1) -- (-1,0.1);
       \draw (-0.8,-0.6) node[anchor=south]{\small{$q-p$}};
    \draw[very thick] (1,-0.1) -- (1,0.1);
       \draw (1.2,-0.6) node[anchor=south]{\small{$p-q$}};
\filldraw(0,1.3) circle(0.08cm); 
    \draw (-0.1,2) node[anchor=east] {\small{$u(\xi,1)$}};
   \end{scope}
   \end{tikzpicture}    \end{center} \caption{ Left: The macroscopic   initial particle density $u_{0}$ of    the initial configurations \eqref{IC}.
     Right: The large time particle density $u(\xi,\tau)$ at the macroscopic time  $\tau=1$. 
     This is the first macroscopic time where the two rarefaction fans come together, thus at the origin, $u$ jumps from $0$ to $1$, and $u(-\varepsilon,1),1-u(\varepsilon,1)>0$ for any $\varepsilon>0$. }\label{one}
\end{figure}
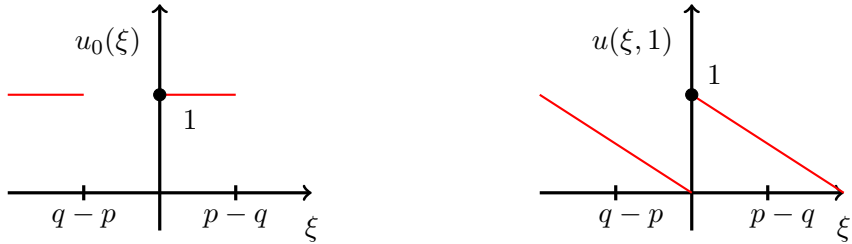

To state our  main result, we  define the Tracy-Widom $\GUE$ distribution, which first appeared in the theory of random matrices \cite{TW94}, as 
\begin{equation*}\label{FGUE2}
F_{\GUE}(s)=\sum_{n=0}^{\infty} \frac{(-1)^{n}}{n!}  \int_{s}^{\infty}\dx x_{1}\ldots  \int_{s}^{\infty}\dx x_{n}\det(K_{2}(x_{i},x_{j})_{1\leq i,j\leq n}),\end{equation*}
where $K_{2}(x,y)$ is the Airy kernel  $K_{2}(x,y)=\frac{Ai(x)Ai^{\prime}(y)-Ai(y)Ai^{\prime}(x)}{x-y},x\neq y, $ defined for $x=y$ by continuity and $Ai$ is the Airy function.  

The following Theorem gives the KPZ fluctuations of the second class particle at the hard shock in a double limit. 
\begin{tthm}\label{MAIN}
Let $C=C(M)$ be as in \eqref{C} and let  $X(t)$ be the position of the second class particle of $\eta_t$. Then for $s \in \R$ we have
\begin{equation*}
\lim_{M\to \infty}\lim_{t \to \infty}\Pb\left(\frac{X(t)}{M^{1/3}}\leq s\right)=\Pb(Y_{\GUE}^{\prime}-Y_{\GUE}\leq s),
\end{equation*}
where $Y_{\GUE}^{\prime},Y_{\GUE}$ are two independent, $\GUE-$distributed random variables.
\end{tthm}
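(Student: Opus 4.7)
My plan is to reduce the fluctuations of the second class particle to a joint statement about two labeled first class particles, one from each rarefaction fan, and then to use a mixing argument to show their asymptotic independence as $M\to\infty$.

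\textbf{Step 1: Coupling to a discrepancy.} I would first realize $X(t)$ as a discrepancy between two basic-coupled ASEPs. Let $\eta^{\pm}$ be the ASEPs that agree with the first-class-particle part of $\eta_0$ everywhere except at the origin, with $\eta^+_0(0)=1$ and $\eta^-_0(0)=0$. Under the graphical (basic) coupling, $\eta^+_{\ell}$ and $\eta^-_{\ell}$ always differ at exactly one site, whose position has the law of $X(\ell)$. Writing $J^{\pm}(s,t)=\#\{j>s:\eta^{\pm}_t(j)=1\}$ for the currents past $s$, one has
\begin{equation*}
\Pb\bigl(X(t)\leq s\bigr)=\Pb\bigl(J^+(s,t)=J^-(s,t)\bigr),
\end{equation*}
and each side can be rewritten as the position of a specific labeled particle $x^{\pm}_{k^{\pm}}(t)$ lying to the right of $s$.

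\textbf{Step 2: $\GUE$ fluctuations from each fan.} With the specified initial data, the right packed block of length $L=\lfloor(p-q)(t-Ct^{1/2})\rfloor$ and the symmetric left block each emit a rarefaction fan meeting at the origin at time $t$. The event $\{J^+(s,t)=J^-(s,t)\}$ at the shock is controlled by the position of a tagged particle deep inside each fan: one coming from the right block (responsible for $J^+$) and one from the left block (responsible for $J^-$). Using the ASEP rarefaction-fan asymptotics and the analysis of the first class particle at the hard shock in \cite{N19}, I would show that each such tagged particle, properly centered by its hydrodynamic position, converges after rescaling to a $F_{\GUE}$ Tracy-Widom variable. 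The choice $C=C(M)=2\sqrt{M/(p-q)}$ places the relevant particles at a depth inside the fan which, after converting back to the $X(t)$ variable, turns the natural $t^{1/3}$ KPZ scale into the $M^{1/3}$ scale appearing in the theorem.

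\textbf{Step 3: Independence via mixing.} The decisive and most delicate step is to show that the two $F_{\GUE}$-distributed limits from the right and left fans are asymptotically independent when $M\to\infty$ (after $t\to\infty$). For fixed $M$ the two tagged particles live in the same coupled configuration, so independence is not automatic. The mechanism, reflected in the paper's title, is that $M$ controls the macroscopic separation between the supports of the two rarefaction fans during the relevant time window; as $M$ grows, the neighborhoods of the two tagged particles become macroscopically disjoint, and the ASEP dynamics locally equilibrate. Concretely, I would replace the exact local environments by appropriate Bernoulli product measures matched to the fan densities and quantify the error using a mixing estimate for ASEP started from such product profiles. Establishing this mixing/decorrelation rigorously for general $p\in(1/2,1]$, where the exact determinantal tools available for TASEP do not apply, is the main obstacle. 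Once this factorization is obtained, combining with Step 2 yields $X(t)/M^{1/3}\Rightarrow Y'_{\GUE}-Y_{\GUE}$, as claimed.
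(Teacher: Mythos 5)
Your high-level picture (each rarefaction fan contributes a $\GUE$ law, and the two contributions decouple) matches the paper's heuristic, but the two steps that carry the actual weight are either absent or rest on a mechanism that does not work. The reduction in Steps 1--2 is the first problem: the identity $\Pb(X(t)\leq s)=\Pb(J^+(s,t)=J^-(s,t))$ is correct but tautological, since $\eta^+_t$ and $\eta^-_t$ differ only at the site $X(t)$, so $J^+(s,t)-J^-(s,t)=\mathbf{1}_{\{X(t)>s\}}$ and the event $\{J^+=J^-\}$ is literally $\{X(t)\leq s\}$. It is not of the form $\{J^{\pm}(s,t)\geq k\}$ for any deterministic $k$, both currents count particles from both fans, and so it cannot be ``rewritten as the position of a specific labeled particle.'' The paper instead introduces the counting variables $\mathcal{P},\mathcal{H}$ of \eqref{PH} and proves the quantitative coupling $|X(t)-\mathcal{H}+\mathcal{P}|\leq M^{\varepsilon}$ with high probability (Theorem \ref{couplthm}); this is the core of the argument and requires showing that once the last relevant particle and hole have arrived, the second class particle effectively lives in a countable state space $\Omega_{\mathcal{H}-\mathcal{P}}$, localizing $X(t-t^{\chi})$, squeezing $\tilde X(t)$ between the leftmost particle and rightmost hole of comparison processes, and invoking mixing-time bounds for these chains (Propositions \ref{deltaprp}, \ref{Xtild}, \ref{expprop}). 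None of this appears in your outline, and the ``mixing'' of the title refers to exactly this relaxation step, not to the independence.

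The second problem is the independence mechanism in Step 3. In the paper, $\mathcal{H}$ and $\mathcal{P}$ become independent already for each \emph{fixed} $M$ as $t\to\infty$ (Proposition \ref{indep}); $M\to\infty$ plays no role there. The mechanism is slow decorrelation: $x^A_L(t-t^{\chi})$ is replaced by $x^A_L(t-t^{\chi}-t^{\kappa})+(p-q)t^{\kappa}$, after which the particle and hole observables depend on disjoint deterministic space-time cones of the graphical construction and are independent up to negligible events. Your proposal — separation growing with $M$, then replacing local environments by Bernoulli product measures matched to the fan densities — is both misattributed and self-defeating: the relevant particles sit at finite label depth $\approx M$ while $t\to\infty$, their limit laws are the crossover distributions $F_{M,p}$ of Theorem \ref{TWT} rather than fan-bulk statistics, and local equilibration to a product measure would erase precisely the finite-$M$ structure that Proposition \ref{FGUE} converts into $F_{\GUE}$ under the $M^{1/3}$ scaling. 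Relatedly, the $M^{1/3}$ in the theorem does not arise by ``converting the natural $t^{1/3}$ scale''; it is the KPZ scale of the label fluctuations of $\mathcal{H},\mathcal{P}$ around $M$.
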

The $M^{1/3}$ term is the typical KPZ $1/3$ fluctuation exponent.
To explain the $Y_{\GUE}^{\prime}-Y_{\GUE}$ limit law, note that inside each of the two rarefaction fans in Figure \ref{one}, the fluctuations of a particle follow a single $\GUE-$distribution (see Theorem \ref{TWT} and Proposition \ref{FGUE} afterwards).
The fluctuations to the left and right of the shock decouple asymptotically, and the second class particle is distributed as the difference of these two $\GUE$s.
We explain this picture more thoroughly in the following Sections \ref{previous} and \ref{heur}.
\begin{rem}
In Theorem \ref{MAIN}, we let $C$ go to plus infinity. It also meaningful to let $C$ go to minus infinity: Then, in the double limit $\lim_{C\to-\infty}\lim_{t\to\infty},$
the second class particle will behave like the second class particle started from $0$ with first class particles starting from $\mathbf{1}_{\Z_{\geq 1}},$ whose fluctuation behavior is given in Proposition \ref{yours}.
\end{rem}
\subsection{Comparison with other shocks}\label{previous}

Consider for $\rho \in [0,1), \lambda\in(\rho,1]$  an  initial data $\eta^{\rho,\lambda}$ where $(\eta^{\rho,\lambda}(i),i\in \Z)$ are independent random variables. 
We have $\Pb(\eta^{\rho,\lambda}(i)=1)=\rho=1-\Pb(\eta^{\rho,\lambda}(i)=0)$ for $i<0$ and 
 $\Pb(\eta^{\rho,\lambda}(i)=1)=\lambda=1-\Pb(\eta^{\rho,\lambda}(i)=0)$ for $i>0$.
There is a second class particle $X^{\rho,\lambda}$ starting from the origin.  In this case,
 there is initially  a shock at the origin which has speed $v=(p-q)(1-\lambda-\rho).$  
 A seminal result for $X^{\rho,\lambda}$ has been obtained in \cite{FF94b}.  Let, with $c_{\lambda,\rho}=(p-q)(\lambda-\rho),$
 $ \mathcal{P}^{\rho}$ be the random number of particles initially in $[-c_{\lambda,\rho}t,-1]$  and let $ \mathcal{H}^{\lambda}$ be the  number of holes initially in $[1,c_{\lambda,\rho}t],$ i.e.

 \begin{equation*} \mathcal{P}^{\rho}=\sum_{j=-c_{\lambda,\rho} t}^{-1}\eta^{\rho,\lambda}(j)
\quad \quad\mathcal{H}^{\lambda}=\sum_{j=1}^{c_{\lambda,\rho}t}1-\eta^{\rho,\lambda}(j).
 \end{equation*}
 It turns out that $ \mathcal{H}^{\lambda}, \mathcal{P}^{\rho}$ determine the behavior of $X^{\rho,\lambda}(t):$
As shown in  \cite[Theorem 1.1]{FF94b}, we have that 
 
 \begin{equation}\label{FF}
 \frac{(\lambda-\rho)X^{\rho,\lambda}(t)-\mathcal{H}^{\lambda}+\mathcal{P}^{\rho}}{t^{1/2}}\to 0
 \end{equation}
 in $L^{2}$. 
 
 Let $Y_{\mathcal{P}^{\rho}},Y_{\mathcal{H}^{\lambda}}$ be  independent, normally distributed random variables,   $Y_{\mathcal{P}^{\rho}}\sim \mathcal{N}(0,v_1),Y_{\mathcal{H}^{\lambda}}\sim \mathcal{N}(0,v_2),$ where  $v_1=(p-q)(\lambda-\rho)\rho(1-\rho)$, $ v_2=v_1\frac{\lambda(1-\lambda)}{\rho(1-\rho)}$.
 Using the standard central limit theorem, it is easy to deduce from \eqref{FF}
 that 
 \begin{equation}\label{gau}
 \frac{X^{\rho,\lambda}(t)-vt}{t^{1/2}}\Rightarrow Y_{\mathcal{H}^{\lambda}}-Y_{\mathcal{P}^{\rho}}
 \end{equation}
 where $\Rightarrow$ means convergence in distribution and of course $Y_{\mathcal{H}^{\lambda}}-Y_{\mathcal{P}^{\rho}}\sim \mathcal{N}(0,v_1+v_2).$ The convergence \eqref{gau} is a special case of  \cite[Theorem 1.2]{FF94b}.

While the shock we consider is different  than in the situation of \eqref{FF}, we can think of our results as analogues of \eqref{FF}, \eqref{gau}  in the absence of initial randomness. Instead of $\mathcal{P}^{\rho},\mathcal{H}^{\lambda}$
we have random variables $\mathcal{P}, \mathcal{H}$ (see \eqref{PH}) which also represent the random number of particles and holes which determine $X(t)$. However, in our case, the randomness of 
$\mathcal{P}, \mathcal{H}$  will be built up by ASEP itself as time evolves, and hence $\mathcal{P}, \mathcal{H}$   follow  the Tracy-Widom $\GUE$ distribution under the KPZ $1/3$  scaling.
The analogue of the statement  \eqref{FF} appears as Theorem \ref{couplthm} below, whereas we can think of Theorem \ref{MAIN} as a KPZ version of \eqref{gau}.
An  important conceptual difference to the shock in \eqref{FF} is that $ \mathcal{P}^{\rho},\mathcal{H}^{\lambda}$ are independent by definition, whereas  the independence of  $\mathcal{P},\mathcal{H}$ is true only asymptotically. Showing this asymptotic independence is non-trivial;
its statement appears as Theorem \ref{Theorem3}.

Concerning shocks   in the totally asymmetric case, the paper \cite{FGN18} considers a shock that on the hydrodynamic scale equals the one in \eqref{FF}, but is created by deterministic initial data.
Specifically, setting $x_{n}^{\rho,\lambda}=-\lfloor n/\rho\rfloor,n>0,$ and  $x_{n}^{\rho,\lambda}=-\lfloor n /\lambda\rfloor+1,n\leq0,$ and putting a second class particle $\tilde{X}^{\rho,\lambda}$ at the origin, we may rewrite 
\cite[Theorem 1.1]{FGN18} as follows\footnote{The roles of $\lambda,\rho$ are exchanged compared to \cite{FGN18}, and we performed an elementary computation to arrive at \eqref{simple}.}: 
\begin{equation}\label{simple}
2^{2/3}(\lambda-\rho)\frac{\tilde{X}^{\rho,\lambda}(t)-vt}{t^{1/3}}\Rightarrow \left(\rho(1-\rho)\right)^{2/3}Y_{\GOE}^{\prime}-\left(\lambda(1-\lambda)\right)^{2/3}Y_{\GOE},
\end{equation}
where $Y_{\GOE}^{\prime},Y_{\GOE}$ are two independent random variables distributed as the Tracy-Widom $\GOE$ distribution from random matrix theory. 

 There is a clear similarity between  \eqref{simple} and Theorem \ref{MAIN}. In particular, note that for the TASEP started from $x_{n}^{\rho,\lambda},n\in \Z$, the particle $x_{\kappa t}^{\rho,\lambda}(t)$ has $\GOE$ distributed fluctuations
 for all $\kappa$ except for $\kappa =\lambda\rho,$ in which case    $x_{\kappa t}^{\rho,\lambda}(t)$ is located at the shock. Thus, as in Theorem \ref{MAIN}, the limit law in \eqref{simple} is the difference of the asymptotically independent fluctuations to the left and the right of the shock. 
 However, the paper \cite{FGN18} crucially uses the coupling of the second class particle in TASEP to the competition interface in last passage percolation, which is only available in TASEP and does not provide a direct understanding  of $ \tilde{X}^{\rho,\lambda}$ as in Theorem \ref{couplthm} and \eqref{FF}.
 
 Finally, a similar picture arises in TASEP with a 'soft shock' between two rarefaction fans:
 By that we mean that at the orgin, two rarefaction fans come together, and the density makes a jump of size $at^{-1/3},a\in \R.$  The double limit $ \lim_{a\to+\infty}\lim_{t\to\infty}$
 thus corresponds to a hardening of the shock. The soft shock is invisible on the hydrodynamic scale,  and  \cite{Nej18} studied the fluctuations of a tagged  particle
 at the soft shock. Recently, \cite{BB19} also studied  the second class particle at the soft shock using a novel color-position symmetry.
It follows from \cite[Theorem 6.3]{BB19} that in the double limit $\lim_{a\to+\infty}\lim_{t\to\infty},$ the second class particle is distributed as the difference of two independent $\GUE$ distributions, in accordance with  our Theorem \ref{MAIN}.
 
  \subsection{Heuristics and method of proof}\label{heur}
Here we give a heuristics to understand why Theorem \ref{MAIN} is  true, by considering  the second class particle in a much simpler shock.

We have  the product blocking measure $\mu$ on $\{0,1\}^{\Z}$ with marginal
\begin{equation}
\mu(\{\zeta:\zeta(i)=1\})=\frac{(p/q)^{i}}{1+ (p/q)^{i}},
\end{equation}
which is concentrated on $\bigcup_{Z\in \Z}\Omega_{Z}$  with  
\begin{equation}\label{omega}
\Omega_{Z}=\{\zeta:\sum_{j<Z}\zeta(j)=\sum_{j\geq Z}1-\zeta(j)<\infty\}.
\end{equation}
An element of $\Omega_{Z}$ that will appear throughout the paper is what we call the reversed step initial data 
\begin{equation}\label{revstep}
\eta^{-\mathrm{step}(Z)}=\mathbf{1}_{\Z_{\geq Z}}.
\end{equation}

Define $\mu_{Z}=\mu(\cdot|\Omega_{Z})$ and we use the short hand notation $\mu_{Z}(i):=\mu_{Z}(\{\zeta:\zeta(i)=1\})$.
Then ASEP started from a $\zeta \in \Omega_{Z}$ (in particular, started from $\eta^{-\mathrm{step}(Z)}$),  is a countable state space  Markov chain, and it has $\mu_{Z}$ as stationary distribution \cite{Lig76}.
The $\mu_{Z}$ satisfy $\mu_{Z}(i)=\mu_{Z+k}(i+k)$ for arbitrary $Z,k$ \cite[special case of Corollary 6.3]{BB18}.
We define a $\Z$-valued random variable $V_{0}$ by 
\begin{equation}
\Pb(V_{0}=i)=\mu_{-1}(i)-\mu_{0}(i)=\mu_{0}(i+1)-\mu_{0}(i).
\end{equation}
With these information, one can deduce the following. 
\begin{prop}\label{yours}
Consider ASEP started from a $\zeta \in \Omega_{Z}$ and a second class particle  starting from a  $j \in \Z\setminus\{i:\zeta(i)=1\}$. 
Let $\tensor[^Z]{X}{_{}}(t)$ be the position of the second class particle at time $t$.
Then for $i \in \Z$  \begin{equation}\label{eq}
\lim_{t \to \infty}\Pb(\tensor[^Z]{X}{_{}}(t)=i)=\Pb(V_{0}+Z=i).
\end{equation}
\end{prop}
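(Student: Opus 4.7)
The plan is to realize the second class particle via the basic coupling of two standard ASEPs lying in adjacent blocking sectors, and then invoke convergence of each to its stationary blocking measure.

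First I would introduce $\zeta^{h}=\zeta$ (a hole at the second class particle's site $j$) and $\zeta^{p}$, which agrees with $\zeta$ off $j$ and has a particle at $j$. Under the basic coupling $\zeta^{h}\leq\zeta^{p}$ pointwise; this ordering is preserved in time, and the difference $\eta^{p}_{t}-\eta^{h}_{t}$ remains a single indicator whose unique $1$ moves exactly as the second class particle $\tensor[^Z]{X}{_{}}(t)$. Consequently
\begin{equation*}
\Pb(\tensor[^Z]{X}{_{}}(t)=i)=\Pb(\eta^{p}_{t}(i)=1)-\Pb(\eta^{h}_{t}(i)=1).
\end{equation*}

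Next I would locate the two initial configurations in the correct $\Omega$-sector. Writing $N(\zeta,Z'):=\sum_{k<Z'}\zeta(k)-\sum_{k\geq Z'}(1-\zeta(k))$, a direct computation shows that shifting $Z'\mapsto Z'+1$ changes $N$ by $+1$, so every element of $\bigcup_{Z'}\Omega_{Z'}$ sits in a unique sector. Converting the hole at $j$ into a particle increases $N(\cdot,Z)$ by $1$ in both cases $j<Z$ and $j\geq Z$, so $\zeta^{h}\in\Omega_{Z}$ while $\zeta^{p}\in\Omega_{Z-1}$. Since ASEP restricted to each $\Omega_{Z'}$ is an irreducible continuous-time Markov chain on a countable state space with unique stationary measure $\mu_{Z'}$ (the cited result of Liggett), the single-site marginals converge, and passing $t\to\infty$ above gives
\begin{equation*}
\lim_{t\to\infty}\Pb(\tensor[^Z]{X}{_{}}(t)=i)=\mu_{Z-1}(i)-\mu_{Z}(i).
\end{equation*}

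Finally, applying the translation property $\mu_{Z'}(i)=\mu_{Z'+k}(i+k)$ with $k=-Z$ rewrites the right hand side as $\mu_{-1}(i-Z)-\mu_{0}(i-Z)=\Pb(V_{0}=i-Z)=\Pb(V_{0}+Z=i)$, which is the claim. The step I expect to require the most care is the marginal convergence as $t\to\infty$: although stationarity of $\mu_{Z'}$ is given, one must separately ensure irreducibility and positive recurrence of ASEP on $\Omega_{Z'}$ in order to transfer stationarity into convergence of single-site marginals. This is standard for the blocking-measure ASEP, but deserves an explicit pointer.
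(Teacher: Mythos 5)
Your proposal is correct and follows essentially the same route as the paper: represent the second class particle as the single discrepancy between the basically coupled ASEPs started from $\zeta$ and from $\zeta$ with the hole at $j$ filled (the paper cites Tracy--Widom for the identity $\Pb(\tensor[^Z]{X}{_{}}(t)=i)=\Pb(\zeta^{1}_{t}(i)=1)-\Pb(\zeta_{t}(i)=1)$), note that these lie in $\Omega_{Z-1}$ and $\Omega_{Z}$ respectively, and pass to the stationary measures $\mu_{Z-1},\mu_{Z}$, finishing with the translation property defining $V_{0}$. Your explicit sector bookkeeping and your remark that convergence of the marginals requires irreducibility and positive recurrence of the chain on $\Omega_{Z'}$ (not just stationarity of $\mu_{Z'}$) are points the paper leaves implicit, but they do not change the argument.
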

\begin{proof}Let     $\zeta^{1}(i)=\mathbf{1}_{\{i:\zeta(i)=1 \, \mathrm{or}\, i=\tensor[^Z]{X}{_{}}(0)\}}$ 
  so that $\zeta^{1}\in \Omega_{Z-1},\zeta\in \Omega_{Z}$ and we have
 \begin{equation}\label{TSW}\Pb(\tensor[^Z]{X}{_{}}(t)=i)=\Pb(\zeta^{1}_{t}(i)=1)-\Pb(\zeta_{t}(i)=1).\end{equation} 
 A proof of (the general fact)   \eqref{TSW} is e.g. provided in \cite{TW09e}.
  Use the convergence to $\mu_{Z-1},\mu_{Z}$ to conclude.
 \end{proof}
 From \eqref{eq} we obtain that for $t$ sufficiently large we have approximately 
\begin{equation}\label{approx}
\tensor[^Z]{X}{_{}}(t)    \approx Z+V_{0}.
\end{equation}

Let us try  to understand the second class particle in Theorem \ref{MAIN} 
by comparing it to the simple shock in Proposition \ref{yours}.
Compared to  ASEP started from $\eta^{-\mathrm{step}(1)}$ with  a second class particle at 0,  in the situation of Theorem  \ref{MAIN}, there are two additional mechanisms: There are the particles $x_{n},n\geq 1,$  (defined in \eqref{fc}) coming from the left which pull the second class particle to the left and there are the holes
$H_{n}(0),n \geq 1, $ (defined in \eqref{ICH}) coming from the right, which pull $X(t)$ to the right. Denote by $\mathcal{P}$ resp. $\mathcal{H}$ the random  number of particles resp. holes  which have interacted with the second class particle during $ [0,t]$. 
After particle $x_{\mathcal{P}}$ and hole $H_{\mathcal{H}}$  have interacted with $X(t)$ at some random time point $\tau\leq t$ no more new particles/holes arrive which could influence the position of $X(t)$. So after time $\tau$, we  might as well replace all particles to the left of  $x_{\mathcal{P}}$ by holes,  and all holes to the right of hole $H_{\mathcal{H}}$ by particles. The point is, by doing that, we get an ASEP particle configuration which lies in $\Omega_{\mathcal{H}-\mathcal{P}}$ and ask for the position of $X(t)$ in it.
Motivated by \eqref{approx}, we can then hope to have
\begin{equation}\label{heureq}
X(t) \approx \mathcal{H}-\mathcal{P}+V_{0}.
\end{equation}
Given \eqref{heureq} holds in a suitable sense, if we show that   $(M-\mathcal{H})M^{-1/3},(M-\mathcal{P})M^{-1/3}$ are asymptotically $\GUE$-distributed and independent,  and $V_{0}M^{-1/3}$ vanishes, this would 
prove Theorem \ref{MAIN}. However, there are of course several problems with \eqref{heureq}, specifically \eqref{approx} holds only for a fixed $Z$ and time going to infinity, 
here, $Z$ is randomly evolving during $[0,t]$ and it is unclear if  at time $t,$ the system has sufficiently relaxed to  (random) equilibrium to justify \eqref{heureq}.  Furthermore, it is unclear why $\mathcal{H},\mathcal{P}$ should be asymptotically independent.

Let us now give the rigorous definitions and theorems which justify the preceding heuristics. 
We label the holes of our initial data  \eqref{IC} as
\begin{equation}\label{ICH}
H_{n}(0)=
\begin{cases}
n+\lfloor (p-q)(t- C t^{1/2})\rfloor   \,\,  &\mathrm{for} \,  n \geq 1 \\
n-1\,\, &\mathrm{for}\, -\lfloor (p-q)(t  - C t^{1/2})\rfloor +1 \leq n \leq 0.
\end{cases}
\end{equation}

We define for $\chi\in(0,1/2),\chi^{\prime}\in (\chi,1/2)$ 
\begin{equation}\label{PH}
\begin{aligned}
&\mathcal{P}=\sup\{i\in \Z| x_{i}(t-t^{\chi})>-t^{\chi^{\prime}}\}
\\&\mathcal{H}=\sup\{i\in \Z| H_{i}(t-t^{\chi})<t^{\chi^{\prime}}\}.
\end{aligned}
\end{equation}
It is easy to see that almost surely, $\mathcal{H},\mathcal{P}$ are finite, so the $\sup$ is in fact a $\max$.
While $\mathcal{H},\mathcal{P}$ depend on $\chi,\chi^{\prime},$ their asymptotic behaviour does not. The roles of $\chi,\chi^{\prime}$ are explained in Section \ref{5}, see in particular Figure \ref{Graph} therein. Here is the rigorous version of \eqref{heureq}: 

\begin{tthm}\label{couplthm}
Consider $X(t)$ with $C=C(M)$ from \eqref{C} and $\mathcal{P},\mathcal{H}$ from \eqref{PH}. Then for $\varepsilon>0$ 
\begin{equation}
\lim_{M\to \infty}\lim_{t\to\infty}\Pb\left( |X(t)-\mathcal{H}+\mathcal{P}|>M^{\varepsilon}\right)=0.
\end{equation}
\end{tthm}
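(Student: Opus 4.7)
The plan is to realize the heuristic of Section \ref{heur} via a coupling argument at the intermediate time $s := t - t^\chi$. The aim is to show that (i) after time $s$, only the ``active'' particles $x_1,\ldots,x_{\mathcal{P}}$ and holes $H_1,\ldots,H_{\mathcal{H}}$ can influence $X$; (ii) replacing everything else by a reversed-step background produces a configuration in $\Omega_{\mathcal{H}-\mathcal{P}}$; (iii) Proposition \ref{yours} applied to this modified dynamics gives $X(t)\approx(\mathcal{H}-\mathcal{P})+V_0$, where $V_0$ is a tight error.

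First, I would prove a finite-speed-of-propagation estimate. By \eqref{PH}, at time $s$ the particle $x_{\mathcal{P}+1}(s)$ sits at position $\leq -t^{\chi'}$ and the hole $H_{\mathcal{H}+1}(s)$ at position $\geq t^{\chi'}$. Since $\chi'>\chi$, the mean displacement $(p-q)t^\chi$ during the residual time $t^\chi$ is much smaller than $t^{\chi'}$, and standard large-deviation estimates for ASEP trajectories (obtained by stochastic domination of single-particle motions by biased random walks in the graphical construction) show that, with probability $1-o_t(1)$, every $x_i$ with $i>\mathcal{P}$ remains at position $\leq -t^{\chi'}/2$ and every $H_j$ with $j>\mathcal{H}$ remains at position $\geq t^{\chi'}/2$ throughout $[s,t]$; a twin estimate confines $X$ to $(-t^{\chi'}/2, t^{\chi'}/2)$ over the same interval.

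Second, I would introduce the modified configuration $\tilde\eta_s$, obtained from $\eta_s$ by turning every first-class particle at position $\leq -t^{\chi'}$ into a hole and every hole at position $\geq t^{\chi'}$ into a first-class particle; by construction $\tilde\eta_s\in\Omega_{\mathcal{H}-\mathcal{P}}$. Running both evolutions under the basic coupling on the same Poisson clocks, the finite-speed event above forces the trajectory of the second class particle during $[s,t]$ to coincide in the original and modified systems. Proposition \ref{yours} applied to $\tilde\eta$ then yields $X(t)-(\mathcal{H}-\mathcal{P})\to V_0$ in distribution as the residual time $t^\chi\to\infty$. Since $V_0$ is a fixed $\Z$-valued random variable it is tight, so $|V_0|\leq M^\varepsilon$ with probability $1-o_M(1)$, which combined with the coupling closes the estimate.

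The main obstacle is quantitative: Proposition \ref{yours} is a convergence statement as $t\to\infty$ for a \emph{fixed} $Z$, whereas here $Z=\mathcal{H}-\mathcal{P}$ is random and grows with $M$. One therefore has to promote \eqref{eq} to an estimate uniform in $Z$ with an explicit rate, presumably through a total-variation bound of the form $\|\mathrm{law}(\tilde\eta_{s+t^\chi})-\mu_{Z}\|_{\mathrm{TV}}=o(1)$, using mixing bounds for ASEP started from reversed-step data. A related subtlety is that one must condition on the frozen profile of active particles and holes inherited from $\eta_s$ and show that this conditioning does not spoil the mixing rate; the Markov property at $s$ isolates the problem, but the uniformity must be delicate enough to cover the $M$-dependence of the random shift.
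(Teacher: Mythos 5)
Your skeleton follows the heuristic of Section \ref{heur} and matches the paper's architecture (freeze at time $t-t^{\chi}$, replace the far-away particles and holes to land in $\Omega_{\mathcal{H}-\mathcal{P}}$, then control the second class particle of the resulting countable state space ASEP), but two of your steps fail as written. The first is the choice of the cut scale. You modify $\eta_{s}$ outside $[-t^{\chi^{\prime}},t^{\chi^{\prime}}]$, which is all that the definition \eqref{PH} of $\mathcal{P},\mathcal{H}$ guarantees about the locations of the active particles and holes. But $\chi^{\prime}>\chi$, so the resulting configuration lives on a window of width $t^{\chi^{\prime}}$ that is much \emph{larger} than the residual time $t^{\chi}$: since particles have bounded speed, no relaxation statement (neither a total-variation bound towards $\mu_{Z}$ nor the mixing-time bound of Proposition \ref{DOIT}, which requires $s>K\mathcal{M}$ with $\mathcal{M}$ of order the window width) can hold in time $t^{\chi}$ on such a window. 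The missing ingredient is the refinement carried out in Proposition \ref{redprop} and Proposition \ref{deltaprp}: with probability tending to one the active particles $x_{1},\dots,x_{\mathcal{P}}$, the active holes, \emph{and} $X(t-t^{\chi})$ all lie within $\pm t^{\delta}$ for some $\delta<\chi$ (the events $B_{L}$, $D_{R}$, and $\{|X(t-t^{\chi})|\le t^{\delta}\}$), which is what allows cutting at $\pm t^{\delta}$ so that $\mathcal{M}\approx 2t^{\delta}\ll t^{\chi}$. In particular your claim that a ``twin estimate'' confines $X(s)$ near the origin by standard large deviations is not justified: controlling $|X(t-t^{\chi})|$ is a nontrivial statement about the whole history on $[0,t-t^{\chi}]$ and occupies all of Proposition \ref{deltaprp}, via the label process $\mathcal{X}^{P}$ and the exponential bound \eqref{562}.

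The second gap is the one you flag yourself: you need a version of Proposition \ref{yours} that is quantitative and uniform in a random, $M$-dependent $Z=\mathcal{H}-\mathcal{P}$, conditioned on the frozen profile at time $s$, and you do not supply it. The paper avoids this entirely: it never proves convergence to the blocking measure. Instead it uses attractivity (Lemma \ref{lem}) to sandwich $\tilde{X}(t)$ between the leftmost particle and the rightmost hole of two auxiliary ASEPs started from the \emph{deterministic} extremal configurations $\bar{\eta}^{1},\bar{\eta}^{2}\preceq\tilde{\eta}^{1}_{t-t^{\chi}},\tilde{\eta}^{2}_{t-t^{\chi}}$ (the inclusion \eqref{bounds3}), and then applies the mixing-time estimate Proposition \ref{DOIT} to get exponential tails $C_{1}e^{-C_{2}M^{\varepsilon}}$ around $R-L$. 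Because $\bar{\eta}^{1},\bar{\eta}^{2}$ are deterministic given $L,R$ and started on fresh Poisson clocks, they are independent of the conditioning event $\mathcal{F}^{\delta}_{L,R}$, which disposes of the conditioning subtlety you raise. If you want to salvage your route, you should import both of these devices; as it stands, the plan cannot be completed.
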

Important tools for proving Theorem \ref{couplthm} are couplings, bounds on inter particle distances, and bounds on the mixing times of countable state space ASEPs, see Section \ref{countable} for the latter. Theorem \ref{couplthm} is proven in Section \ref{5}.

The following theorem gives the limit law of $\mathcal{H}-\mathcal{P}$:
\begin{tthm}\label{Theorem3}
We have for $s \in \R$  and $C=C(M)$ as in \eqref{C}
\begin{equation*}
\lim_{M\to \infty}\lim_{t \to \infty}\Pb\left(\frac{\mathcal{H}-M}{M^{1/3}} -  \frac{\mathcal{P}-M}{M^{1/3}}\leq s\right)=\Pb(Y_{\GUE}^{\prime}-Y_{\GUE}\leq s),
\end{equation*}
where $Y_{\GUE}^{\prime},Y_{\GUE}$ are two independent, $\GUE-$distributed random variables.
\end{tthm}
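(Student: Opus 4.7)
The plan is to obtain Theorem \ref{Theorem3} by combining (i) marginal $\GUE$ Tracy-Widom convergence for $(\mathcal{P}-M)/M^{1/3}$ and $(\mathcal{H}-M)/M^{1/3}$ separately, and (ii) their asymptotic independence in the double limit $\lim_{M\to\infty}\lim_{t\to\infty}$.

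For the marginals, I would first translate the definitions in \eqref{PH}: by monotonicity of the particle labeling,
\begin{equation*}
\{\mathcal{P}\geq k\}=\{x_{k}(t-t^{\chi})>-t^{\chi^{\prime}}\},\qquad \{\mathcal{H}\geq k\}=\{H_{k}(t-t^{\chi})<t^{\chi^{\prime}}\}.
\end{equation*}
With $C(M)=2\sqrt{M/(p-q)}$, a standard hydrodynamic computation for the left rarefaction fan places the macroscopic position of $x_{M+sM^{1/3}}(t-t^{\chi})$ at the origin up to a displacement determined by $s$; the $\GUE$ fluctuation statement for a single particle inside a rarefaction fan, available through Theorem \ref{TWT} and Proposition \ref{FGUE}, then yields the claimed marginal limit for $\mathcal{P}$. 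The same conclusion for $\mathcal{H}$ follows by particle-hole duality. The displacement $t^{\chi^{\prime}}$ away from the origin and the time offset $t^{\chi}$ produce only lower-order corrections inside the KPZ window $M^{1/3}$, since $\chi,\chi^{\prime}$ can be chosen arbitrarily small in $(0,1/2)$.

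The heart of the proof is step (ii), the asymptotic independence. I would use a two-step decoupling argument. First, let $\eta^{L}$ denote the ASEP which coincides with $\eta_{0}$ on the support of the left rarefaction fan but has reversed step data $\eta^{-\mathrm{step}(Z)}$ from \eqref{revstep} on the right half-line (for an appropriate $Z$ matching the local density), and define $\eta^{R}$ by the symmetric particle-hole construction on the other side. Using basic coupling, second-class particle tracking, and the mixing bounds of Section \ref{countable} to control how the discrepancy between the true initial data $\eta_0$ and those of $\eta^{L}$ (resp.\ $\eta^{R}$) propagates, one shows that the value of $\mathcal{P}$ computed from $\eta$ equals the analogous quantity computed from $\eta^{L}$ with probability $1-o_{M}(1)$, and similarly for $\mathcal{H}$ and $\eta^{R}$. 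Second, I would realize $\eta^{L}$ and $\eta^{R}$ using two independent Poisson clock families, one concentrated on bonds sufficiently far to the left of the origin and the other sufficiently far to the right, so that $\mathcal{P}$ and $\mathcal{H}$ become genuinely independent functionals.

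The main obstacle is this last localization-of-clocks step. A priori, $\eta^{L}$ and $\eta^{R}$ share the same clocks on the central bonds for the entire interval $[0,t-t^{\chi}]$, so one must argue that these shared clocks do not shift $\mathcal{P}$ or $\mathcal{H}$ by more than $o(M^{1/3})$. The mixing estimates of Section \ref{countable} are again the crucial input, because they quantify how quickly the process near the shock equilibrates to a blocking measure $\mu_{Z}$, and hence how little the dynamics on a fixed neighborhood of the origin can influence the leading particles of either fan. Once (i) and the decoupling in (ii) are in place, the pair $((\mathcal{P}-M)/M^{1/3},(\mathcal{H}-M)/M^{1/3})$ converges jointly to a product of two independent $\GUE$ distributions, and applying subtraction by the continuous mapping theorem yields Theorem \ref{Theorem3}.
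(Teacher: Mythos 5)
Your overall architecture --- marginal convergence of $(\mathcal{P}-M)/M^{1/3}$ and $(\mathcal{H}-M)/M^{1/3}$, asymptotic independence, then the continuous mapping theorem --- is the same as the paper's (Proposition \ref{convp}, Corollary \ref{corgue}, Proposition \ref{indep}), and your treatment of the marginals is essentially sound: the identity $\{\mathcal{P}\geq k\}=\{x_{k}(t-t^{\chi})>-t^{\chi'}\}$, the replacement of the true particles by those of a comparison process with shifted step data so that Theorem \ref{TWT} and Proposition \ref{FGUE} apply, and the irrelevance of the $t^{\chi}$, $t^{\chi'}$ cutoffs against the $t^{1/2}$ fluctuation window all match the paper's Proposition \ref{convp} (which compares with $x_{n}^{A}$, $H_{n}^{B}$ of \eqref{A}, \eqref{B} and controls the discrepancy by a first-discrepancy-time argument together with the coupling inequality $x_{n}(t)\leq x_{n}^{A}(t)$).

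The independence step, however, has a genuine gap. By \eqref{PH}, $\{\mathcal{P}=L\}$ and $\{\mathcal{H}=R\}$ are statements about whether $x_{L}$, $x_{L+1}$ and $H_{R}$, $H_{R+1}$ have crossed $\mp t^{\chi'}$ by the \emph{common} time $t-t^{\chi}$; at that time both the relevant particles and the relevant holes sit within $O(t^{1/2})$ of the origin, and whether each event holds is decided precisely by the dynamics in that central region near time $t-t^{\chi}$. So the two functionals depend on the same Poisson clocks near the origin, and no assignment of independent clock families to bonds ``sufficiently far to the left'' and ``sufficiently far to the right'' can realize them as independent as they stand. Your proposed remedy --- the mixing estimates of Section \ref{countable} --- is the wrong tool: Propositions \ref{blockp} and \ref{DOIT} and Lemma \ref{lem} control the relaxation of countable state space configurations to the blocking measures $\mu_{Z}$, i.e.\ the $O(1)$ position of the leftmost particle/rightmost hole, and say nothing about the fluctuations of $x_{L}$ or $H_{R}$, which are accumulated along the entire macroscopic trajectory; in the paper these estimates are used only for Theorem \ref{couplthm}. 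The missing idea is \emph{slow decorrelation}: the paper first replaces $x_{L}^{A}(t-t^{\chi})$ by $x_{L}^{A}(t-t^{\chi}-t^{\kappa})+(p-q)t^{\kappa}$ for $\kappa\in(1/2,1)$, which by \eqref{slowdec} changes the observable only by $o(t^{1/2})$ and hence leaves the limit law untouched, but moves the observation of the particle to a time at which its entire trajectory lies, with probability tending to one, in a deterministic space-time region disjoint from that of $H_{R}^{B}$ up to time $t-t^{\chi}$. Only after this shift can one build the independent versions $\tilde{x}^{A}$, $\tilde{H}^{B}$ driven by disjoint clock families and conclude. Without slow decorrelation (or an equivalent input), the asymptotic factorization $\lim_{t\to\infty}\Pb(\{\mathcal{H}=R\}\cap\{\mathcal{P}=L\})=\lim_{t\to\infty}\Pb(\mathcal{H}=R)\Pb(\mathcal{P}=L)$ does not follow from your argument.
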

A key tool used to prove Theorem \ref{Theorem3} are comparisons with ASEPs with step initial data as well as the slow-decorrelation method \cite{Fer08},\cite{CFP10b}. Theorem \ref{Theorem3} is proven in Section \ref{4}.

\subsection{Proof of Theorem \ref{MAIN}}
Given Theorem \ref{couplthm} and Theorem \ref{Theorem3}, it is not very hard to  prove our main result Theorem \ref{MAIN}:
\begin{proof}[Proof of Theorem \ref{MAIN}]
Define for $\varepsilon<1/3$ 
\begin{equation*}
K_{t,\varepsilon}=\{|X(t)-\mathcal{H}+\mathcal{P}|\leq M^{\varepsilon}\}
\end{equation*}
Then, using Theorem \ref{couplthm},
we have 
\begin{align*}
\lim_{M\to \infty}\lim_{t\to\infty}\Pb\left( \frac{X(t)}{M^{1/3}} \leq s \right)=\lim_{M\to \infty}\lim_{t\to\infty}\Pb\left( \bigg\{\frac{X(t)}{M^{1/3}}\leq s\bigg\} \cap K_{t,\varepsilon}\right).
\end{align*}
Consequently, we obtain the inequalities 
\begin{align*}
\lim_{M\to \infty}     \lim_{t\to\infty}    \Pb\left( \frac{X(t)}{M^{1/3}}  \leq s \right)\leq \lim_{M\to \infty}\lim_{t\to\infty}\Pb\left( \frac{\mathcal{H}-\mathcal{P}}{M^{1/3}}\leq s +M^{\varepsilon-1/3}\right)
\end{align*}
and 
\begin{align*}
\lim_{M\to \infty}\lim_{t\to\infty}\Pb\left( \frac{\mathcal{H}-\mathcal{P}}{M^{1/3}}\leq s -M^{\varepsilon-1/3}\right)
\leq \lim_{M\to \infty}\lim_{t\to\infty}\Pb\left( \frac{X(t)}{M^{1/3}}\leq s \right).
 \end{align*}
 This finishes the proof using Theorem \ref{Theorem3}.
\end{proof}
\subsection{Labelings and basic coupling}\label{lbc}
It will be important for us that we may construct a second class particle as the position where two ASEPs with first class particles differ.
We will define two ASEPs $(\eta_{\ell}^{1})_{\ell\geq 0},(\eta_{\ell}^{2})_{\ell\geq 0}$ on $\{0,1\}^{\Z}$ which differ from $\eta_{0}$ in that 
 $\eta_{0}^{1}$ has a first class particle instead of the second class particle,
 and $\eta_{0}^{2}$ has a hole instead.
 For later usage, let us explicitly write down the labelings of the particles of  $\eta_{0}^{1}$ as 
\begin{equation}\label{IC1}
x_{n}^{1}(0)=
\begin{cases}
-n-\lfloor (p-q)(t- C t^{1/2})\rfloor   \quad &\mathrm{for} \,  n \geq 1 \\
-n\quad &\mathrm{for}\, -\lfloor (p-q)(t  - C t^{1/2})\rfloor  \leq n \leq 0,
\end{cases}
\end{equation}
and the holes of $\eta_{0}^{2}$ as 
\begin{equation}\label{H2}
H_{n}^{2}(0)=
\begin{cases}
n+\lfloor (p-q)(t- C t^{1/2})\rfloor   \quad &\mathrm{for} \,  n \geq 1 \\
n\quad &\mathrm{for}\, -\lfloor (p-q)(t  - C t^{1/2})\rfloor  \leq n \leq 0.
\end{cases}
\end{equation}
So in short, we have
\begin{equation*} \eta_{0}^{1}=\mathbf{1}_{\{x_{n}^{1}(0),n\geq  -\lfloor (p-q)(t  - C t^{1/2})\rfloor\}}\quad  \eta_{0}^{2}=\mathbf{1}_{\Z}-\mathbf{1}_{\{H_{n}^{2}(0),n\geq  -\lfloor (p-q)(t  - C t^{1/2})\rfloor\}}.\end{equation*}
Initially, $ \eta_{0}^{1},\eta_{0}^{2}$ differ only at position zero.
Under the basic coupling (i.e. the dynamics of  $ \eta_{\ell}^{1},\eta_{\ell}^{2}$ are constructed using the same poisson processes),   $ \eta_{\ell}^{1},\eta_{\ell}^{2}$  differ exactly at one random position for all $\ell\geq 0$. We can define the position of the second class particle $X(t)$ of $\eta_{t}$ as the position of this discrepancy:
\begin{equation}\label{X(t)}
X(t):=\sum_{j\in \Z}j\mathbf{1}_{\{\eta_{t}^{1}(j)\neq \eta_{t}^{2}(j)\}}.
\end{equation}
An important simple observation is that we always have \[\eta_{t}^{1}(X(t))=1>\eta_{t}^{2}(X(t))=0.\]

\subsection{Outline}
In Section \ref{compare}, we introduce ASEP with step initial data and the main fluctuation result we need for it. ASEPs with shifted step initial data will play an important role to obtain the fluctuations of $\mathcal{P},\mathcal{H}$.
 In Section \ref{countable} we provide bounds on the position of holes/particles in countable state space ASEPs that we will use. In Section \ref{4}, Theorem \ref{Theorem3} is proven, and in Section \ref{5}, Theorem \ref{couplthm} is proven.

\section{ASEPs with step initial data}\label{compare}
By ASEP with step initial data  we mean the initial configuration $x_{n}^{\mathrm{step}}(0)=-n,n\geq 1.$ We use as input that the fluctuations in ASEP with step initial data have been obtained.
Let us start with the definition of the relevant distribution functions. 
\begin{defin}[ \cite{TW08b},\cite{GW90}] Let $s \in \R,M \in \Z_{\geq 1}$. We define for $p\in (1/2,1)$  
\begin{equation} \label{def1}
F_{M,p}(s)=\frac{1}{2 \pi i} \oint  \frac{\dx \lambda}{\lambda} \frac{\det(I-\lambda K)}{\prod_{k=0}^{M-1}(1-\lambda (q/p)^{k})}
\end{equation}
where $K= \hat{K}\mathbf{1}_{(-s,\infty)}$  and 
$\hat{K}(z,z^{\prime})=\frac{p}{\sqrt{2 \pi}}e^{-(p^{2}+q^{2})(z^{2}+z^{\prime 2})/4+pqzz^{\prime}}$ and the integral is taken over a counterclockwise oriented contour enclosing the
poles $\lambda=0,\lambda=(p/q)^{k},k=0,\ldots,M-1$.  For $p=1$, we define
\begin{equation*}
F_{M,1}(s)=\Pb\left(\sup_{0=t_{0}<\cdots <t_{M}=1}\sum_{i=0}^{M-1}[B_{i}(t_{i+1})-B_{i}(t_{i})]\leq s    \right),
\end{equation*}
where $B_{i},i=0,\ldots,M-1$ are independent standard Brownian motions. \end{defin}

What is important to us is that $F_{M,p}$ arises as limit law in ASEP with step initial data.
\begin{tthm}[Theorem 2 in \cite{TW08b},  Corollary 3.3 in \cite{GW90}]\label{TWT}
Consider $x_{n}^{\mathrm{step}}(0)=-n,n\geq 1$. We have for $M\in \Z_{\geq 1}$  that 
\begin{equation*}
\lim_{t \to \infty}\Pb(x_{M}^{\mathrm{step}}(t)\geq(p-q)(t-st^{1/2}))=F_{M,p}(s).
\end{equation*}
\end{tthm}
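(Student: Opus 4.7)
Since the statement is quoted verbatim from \cite{TW08b} (for $p\in(1/2,1)$) and \cite{GW90} (for $p=1$), the plan is to invoke those results directly. Below I sketch what their proofs entail.

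For $p\in(1/2,1)$, the starting point is Tracy--Widom's exact formula for the marginal of $x_M^{\mathrm{step}}(t)$, obtained via the coordinate Bethe ansatz. It expresses $\Pb(x_M^{\mathrm{step}}(t)\geq y)$ as a contour integral in an auxiliary variable $\lambda$ of a Fredholm determinant, the $\lambda$-contour enclosing the simple poles at $\lambda=(p/q)^{k}$, $k=0,\dots,M-1$, that also appear in Definition~\ref{def1}. One inserts $y=(p-q)(t-st^{1/2})$ and performs a steepest descent analysis as $t\to\infty$. The exponential factor in the kernel has a unique critical point; after rescaling the spatial variables by $t^{-1/2}$ and deforming the descent contours through that critical point, the Fredholm kernel converges pointwise (and in trace norm on the truncation) to the Gaussian kernel $\hat K$ acting on $(-s,\infty)$, while the pole structure in $\lambda$ and the $\lambda$-contour are preserved in the limit. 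Assembling these pieces yields exactly $F_{M,p}(s)$.

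For $p=1$, the natural strategy is combinatorial: use the standard bijection identifying step-TASEP with directed last-passage percolation with i.i.d.\ Exp$(1)$ weights, under which $t-x_M^{\mathrm{step}}(t)-M$ equals, up to an affine adjustment, the LPP time to a row of length $M$. In the diffusive scaling $t^{1/2}$ with $M$ fixed, the rescaled exponential weights in the relevant strip converge via Donsker's invariance principle to Brownian increments, and by continuity of the broken-line path-supremum functional the LPP time converges in distribution to the Brownian LPP functional appearing in the definition of $F_{M,1}$. This is the route taken by Gravner--Widom.

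The main obstacle in the $p<1$ case is the steepest descent step: the $\lambda$-contour and the kernel contours must be simultaneously deformed through the critical point while remaining on the correct side of the finitely many poles $\lambda=(p/q)^{k}$, and one needs uniform decay estimates on the kernel along those contours in order to convert pointwise convergence into trace-class convergence of the Fredholm determinant. A secondary subtlety is that the Tracy--Widom formula degenerates as $p\to 1$, which is precisely why the TASEP case must be treated separately via the Brownian LPP argument rather than as a limit of the ASEP formula.
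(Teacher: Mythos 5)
The paper offers no proof of this statement: it is imported verbatim as an external input, cited to Theorem 2 of \cite{TW08b} and Corollary 3.3 of \cite{GW90}, exactly as you propose to do. Your sketch of what those references actually prove (steepest descent on the Tracy--Widom contour-integral/Fredholm formula for $p<1$, and the Brownian last-passage limit for $p=1$) is a reasonable account of their content, but for the purposes of this paper the citation alone is the "proof," so your approach coincides with the paper's.
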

Finally, we need to know that $F_{M,p}$ converges to $F_{\GUE}$ on the right scale.

\begin{prop}[Proposition 2.1 in \cite{N19}]\label{FGUE}

Let $s \in \R.$  Then we have 
\begin{equation}\label{two1}
\lim_{M \to \infty}F_{M,p}\left(\frac{2\sqrt{M}+sM^{-1/6}}{\sqrt{p-q}}\right)=F_{\GUE}(s).
\end{equation}

\end{prop}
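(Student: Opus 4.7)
The plan is to prove Proposition \ref{FGUE} by a direct saddle-point asymptotic analysis of the contour-integral formula \eqref{def1} as $M\to\infty$ under the critical scaling
$$s_M := \frac{2\sqrt{M}+sM^{-1/6}}{\sqrt{p-q}}.$$
First I would rescale the kernel variables. Using $p+q=1$, the Gaussian kernel factors as $\hat K(z,z') = \tfrac{p}{\sqrt{2\pi}}\exp\!\big(-(p-q)^2(z+z')^2/8 - (z-z')^2/8\big)$. I would then substitute $z = -s_M + \xi/(\sqrt{p-q}\,M^{1/6})$ and $z'$ analogously, so that the indicator $\mathbf 1_{(-s_M,\infty)}$ becomes $\mathbf 1_{(s,\infty)}$ in the new coordinate. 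Expanding the Gaussian exponent around $-s_M$ together with the Jacobian $(\sqrt{p-q}\,M^{1/6})^{-1}$ and applying a Mehler/saddle-point expansion, the rescaled kernel should converge, first pointwise and then (via Gaussian tail bounds) in trace-class norm on $L^2(s,\infty)$, to the Airy kernel $K_2(\xi,\xi')$. This yields $\det(I-\lambda K)\to \det(I-\lambda K_2\,\mathbf 1_{(s,\infty)})$ uniformly for $\lambda$ in suitable compact sets off the spectrum of $K_2\mathbf 1_{(s,\infty)}$.

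Next I would analyze the contour integral over $\lambda$. Since $q/p<1$, the truncated product $\prod_{k=0}^{M-1}(1-\lambda(q/p)^k)$ tends to the $q$-Pochhammer $(\lambda;q/p)_\infty$, an entire function with simple zeros at $\lambda=(p/q)^k$ for $k\ge 0$. Deforming the original counterclockwise contour (which encloses $\lambda=0$ and the $M$ truncated zeros) to a small circle around $\lambda=1$, and arguing that contributions from the other residues vanish as $M\to\infty$ against the shrinking $\det(I-\lambda K)$, one is left with a residue at $\lambda=1$ equal to $\det(I-K_2\,\mathbf 1_{(s,\infty)}) = F_{\GUE}(s)$. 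The interchange of the $M\to\infty$ limit, the $\lambda$-integration, and the Fredholm determinant is justified by uniform-in-$M$ Hadamard-type bounds on $\det(I-\lambda K)$ along the deformed contour, combined with standard Gaussian tail control of $\hat K$.

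The hard part will be the joint asymptotic analysis: one must deform the $\lambda$-contour past all $M$ zeros of $\prod_{k=0}^{M-1}(1-\lambda(q/p)^k)$ without losing control of the integrand, while simultaneously proving trace-class convergence of $K$ to $K_2\,\mathbf 1_{(s,\infty)}$. Obtaining uniform bounds that make the residue at $\lambda=1$ single out $F_{\GUE}(s)$, and carefully accounting for the interplay between $(\lambda;q/p)_M$ and $\det(I-\lambda K)$ along the deformed contour, is the technical heart of the argument; everything else reduces to standard KPZ saddle-point estimates.
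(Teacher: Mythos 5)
This proposition is not proved in the paper at all: it is imported verbatim as Proposition~2.1 of \cite{N19}, so there is no internal proof to compare against. Judged on its own terms, your proposal has a fatal gap at its very first step. The kernel $\hat K$ is a \emph{fixed} Gaussian (Mehler) kernel, independent of $M$; by Mehler's formula it diagonalizes as $\hat K=\sum_{k\ge 0}(q/p)^k\,\phi_k\otimes\phi_k$ with $\phi_k$ the Hermite functions in the variable $z\sqrt{(p-q)/2}$. Evaluating it near the edge $-s_M$ with $s_M=\frac{2\sqrt M+sM^{-1/6}}{\sqrt{p-q}}$ gives
\begin{equation*}
\hat K(-s_M,-s_M)=\frac{p}{\sqrt{2\pi}}\,e^{-(p-q)^2 s_M^2/2}=\frac{p}{\sqrt{2\pi}}\,e^{-2(p-q)M(1+o(1))},
\end{equation*}
which is exponentially small; the Jacobian $M^{-1/6}$ from your change of variables cannot compensate, so the rescaled kernel tends to $0$, not to the Airy kernel. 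In fact, since $-s_M\to-\infty$, one has $K=\hat K\mathbf 1_{(-s_M,\infty)}\to\hat K$ in trace norm, hence for each fixed $\lambda$
\begin{equation*}
\det(I-\lambda K)\longrightarrow \det(I-\lambda\hat K)_{L^2(\R)}=\prod_{k\ge 0}\bigl(1-\lambda(q/p)^k\bigr),
\end{equation*}
which is emphatically not $\det(I-\lambda K_2\mathbf 1_{(s,\infty)})$. This also kills the second half of your argument: the residue of the integrand at $\lambda=1$ is $-\det(I-K)/\prod_{k=1}^{M-1}(1-(q/p)^k)$, and since $\det(I-K)\to\prod_{k\ge0}(1-(q/p)^k)=0$ (the $k=0$ factor vanishes), that residue tends to $0$, not to $F_{\GUE}(s)$.

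The real mechanism is a delicate near-cancellation that your scheme never touches. The eigenvalues $\mu_n$ of $K$ are perturbations of $(q/p)^n$, with the perturbation controlled by the mass of $\phi_n$ to the left of $-s_M$; because the turning point of $\phi_n$ sits at $-2\sqrt{n/(p-q)}$, this mass switches from exponentially small to order one precisely as $n$ crosses $M$, on the scale $n=M+O(M^{1/3})$. The Airy/Tracy--Widom structure therefore enters through Plancherel--Rotach asymptotics in the \emph{index} $n$ near $M$ — equivalently, through the competition between the zeros of $\det(I-\lambda K)$ near $\lambda=(p/q)^n$ and the poles $(p/q)^k$, $k=0,\dots,M-1$, of the denominator along a contour that must enclose all of them and hence grows with $M$ — not through a spatial $M^{1/6}$-rescaling of the Gaussian kernel at the edge of the interval. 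Any correct proof along these lines (or a reduction to the largest eigenvalue of the $M\times M$ GUE via the Hermite diagonalization, followed by classical edge universality) has to confront exactly this interplay, which is the step your proposal replaces with two false convergence claims.
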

We will often compare the particles and holes  $x_{n},H_{n},n\geq 1$ with the ones defined in \eqref{A},\eqref{B} below.
Let $(\eta^{A}_{t})_{t\geq 0}$ be the ASEP started from $\eta^{A}_{0}=\mathbf{1}_{\{x_{n}^{A}(0),n\geq1\}}$ with 
\begin{equation}\label{A}
x_{n}^{A}(0)=
-n-\lfloor (p-q)(t- C t^{1/2})\rfloor   \quad \mathrm{for} \,  n \geq 1,
\end{equation}
and let  $(\eta^{B}_{t})_{t\geq 0}$ be the ASEP started from $\eta^{B}_{0}=\mathbf{1}_{\Z}-\mathbf{1}_{\{H_{n}^{B}(0),n\geq1\}}$ with 
\begin{equation}\label{B}
H_{n}^{B}(0)=
n+\lfloor (p-q)(t- C t^{1/2})\rfloor   \quad \mathrm{for} \,  n \geq 1.
\end{equation}
Note that both $(\eta^{A}_{t})_{t\geq 0}$ ,$(\eta^{B}_{t})_{t\geq 0}$ are ASEPs with shifted step initial data,
hence Theorem \ref{TWT} applies to them.
Furthermore, it is easy to see that under the basic coupling, we always have $x_{n}(t)\leq x_{n}^{A}(t),H_{n}(t)\geq H_{n}^{B}(t), n\geq 1. $
\section{Countable state space ASEPs}\label{countable}
Here we collect the bounds of particle positions in countable state space ASEPs that were obtained in \cite{N19}. Unlike in \cite{N19} however,
these bounds  are not directly applicable.   
We amend them to include statements about the rightmost hole in countable state space ASEPs. 
In Section \ref{5},  we will then squeeze the second class particle  in between the leftmost particle and the rightmost hole of countable state space ASEPs.

We  will need the following  bound on the position of the leftmost particle of $\eta^{-\mathrm{step}(Z)}_{\ell}$ .

\begin{prop}[Proposition 3.1 in \cite{N19}]\label{blockp}
Consider  ASEP with reversed step  initial data $x_{-n}^{\mathrm{-step(Z)}}(0)=n+Z,n\geq 0$ and let $\delta>0$. Then  there is a $t_0$ such that for $t>t_{0},R\in \Z_{\geq 1}$ and constants $C_{1},C_{2}$ (which depend on $p$) we have
\begin{align*}
&\Pb\left(  x_{0}^{\mathrm{-step}(Z)}(t)<Z-R\right)\leq C_{1}e^{-C_{2}R}
\\&
\Pb\left( \inf_{0\leq \ell \leq t} x_{0}^{\mathrm{-step}(Z)}(\ell)<Z-t^{\delta}\right)\leq C_{1}e^{-C_{2}t^{\delta}}.
\end{align*}
\end{prop}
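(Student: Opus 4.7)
The plan is to dominate the deviation
\[
L(\ell) := (Z - x_0^{-\mathrm{step}(Z)}(\ell))_+
\]
by a simple biased random walk on $\Z_{\geq 0}$, exploiting that $p>q$ produces a strong restoring drift whenever the leftmost particle wanders into the originally empty region $(-\infty, Z-1]$. Both bounds of the proposition will then follow from standard estimates for a geometrically ergodic birth-and-death chain.

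In the first step I would establish, via the basic coupling, that $L(\ell)$ is stochastically bounded above by a continuous-time Markov chain $Y(\ell)$ on $\Z_{\geq 0}$ starting at $Y(0)=0$, with upward jump rate $q$ and, when $Y\geq 1$, downward jump rate $p$. The heuristic is clear: once $x_0^{-\mathrm{step}(Z)}(\ell)$ is strictly left of $Z$, its left-neighboring site is empty (since it is the leftmost particle) so the leftward rate is exactly $q$, and generically its right-neighboring site lies in the originally empty region and is unoccupied, so the rightward rate is $p$. The delicate point is the moments when a cluster of several leftmost particles forms in the empty region, momentarily blocking the rightward motion of $x_0$; however, the cluster's rightmost element is itself released at rate $p$ as soon as the next particle to its right lies at distance $\geq 2$, so a short-timescale averaging argument shows that the cluster's leftmost position still evolves like a walker with effective down-rate $p$.

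Granting the domination, the second step is standard. The chain $Y$ has geometric stationary distribution $\pi(k) = (1-q/p)(q/p)^k$, and since $Y(0)=0$ is the minimum state, monotone coupling yields $Y(\ell) \leq_{\mathrm{st}} Y_\infty \sim \pi$ for every $\ell \geq 0$, whence
\[
\Pb\bigl(x_0^{-\mathrm{step}(Z)}(\ell) < Z - R\bigr) \leq \Pb(Y_\infty \geq R) = (q/p)^R,
\]
which gives the first bound with $C_2 = \log(p/q)$. For the running-infimum bound, I would estimate $\sup_{0 \leq \ell \leq t} Y(\ell)$ via an excursion decomposition: each excursion of $Y$ away from $0$ has expected length $O(1)$ (since the drift toward $0$ is strictly positive), so $Y$ completes at most $O(t)$ excursions in $[0,t]$ with overwhelming probability, and by the classical gambler's-ruin computation each such excursion reaches level $R$ with probability $\sim \tfrac{p-q}{p}(q/p)^R$. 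A union bound then yields
\[
\Pb\Bigl(\sup_{0 \leq \ell \leq t} L(\ell) > t^{\delta}\Bigr) \leq C t (q/p)^{t^{\delta}} \leq C_1 e^{-C_2 t^{\delta}}
\]
for $t$ sufficiently large, since the polynomial factor $t$ is dominated by the stretched-exponential decay.

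The main obstacle is making the stochastic domination in the first step fully rigorous, particularly handling the cluster dynamics at the leftmost position. If a direct coupling proves unwieldy, a robust alternative is to work with the Lyapunov function
\[
V(\zeta) := \sum_{j < Z} \zeta(j)(p/q)^{Z-j} + \sum_{j \geq Z}(1-\zeta(j))(p/q)^{j-Z+1},
\]
which weights deviations from the reversed step by exponential factors matching the blocking measure. A swap-by-swap computation of the ASEP generator should give $LV(\zeta) \leq -c V(\zeta) + C$ with $c, C$ depending only on $p$, so that $\E[V(\eta^{-\mathrm{step}(Z)}_\ell)]$ remains uniformly bounded in $\ell$. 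Markov's inequality on $V$ gives the first bound, and Doob's maximal inequality applied to an exponential supermartingale built from $V$ gives the second.
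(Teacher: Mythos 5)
The core step of your argument --- the pathwise domination of $L(\ell)=(Z-x_0^{-\mathrm{step}(Z)}(\ell))_+$ by a birth--death chain with up-rate $q$ and down-rate $p$ --- does not hold, and the difficulty you flag is not one that ``short-timescale averaging'' can repair. The leftmost particle always has an empty left neighbour, so $L$ increases at rate $q$, but it decreases at rate $p\,\mathbf{1}\{\text{site } x_0+1 \text{ empty}\}$, which is $0$ whenever a second particle sits immediately to its right. Any coupling keeping $L(\ell)\le Y(\ell)$ must, at a time when $L=Y\ge 1$ and $x_0$ is blocked, let $Y$ jump down at rate $p$ while $L$ cannot follow, so the ordering breaks; replacing rates by time-averaged ``effective'' rates is a heuristic, not a stochastic domination, and conditionally on $x_0$ being far to the left clustering is not rare, with its effect going in the unfavourable direction. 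Your Lyapunov fallback does not rescue this: the weight $(p/q)^{Z-j}$ is exactly the harmonic function of the free $(p,q)$-walk, so for an isolated particle at $j<Z-1$ the generator applied to its contribution is $\bigl[q((p/q)-1)+p((q/p)-1)\bigr](p/q)^{Z-j}=0$; the drift of $V$ is zero, not $\le -cV+C$, and the adjacent-pair (blocking) corrections are in fact \emph{positive}, e.g. a pair at $j,j+1$ contributes $(p-q)^2(p/q)^{Z-j-1}/q>0$. So neither route, as written, yields the claimed Foster--Lyapunov or supermartingale inequality.

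The proposition is imported from \cite{N19}, where the proof goes through the stationary blocking measure rather than a walk comparison, and this is the natural fix. One checks that $\eta^{-\mathrm{step}(Z)}$ is the maximal element of $\Omega_Z$ for the order \eqref{order} (for any $\eta\in\Omega_Z$ one has $\sum_{j\ge r}(1-\eta(j))\ge (Z-r)_+$, with equality for the reversed step), so by the attractivity statement of Lemma \ref{lem} the leftmost particle started from $\eta^{-\mathrm{step}(Z)}$ lies, at every time $s$, to the right of the leftmost particle of a coupled chain started from the stationary law $\mu_Z$. Since $\mu_Z$ gives geometrically small probability to occupation of sites far left of $Z$, the first bound follows uniformly in $t$; the running-infimum bound then follows from a union bound over $O(t)$ unit time intervals, using stationarity at the left endpoint of each interval together with a Poisson tail bound excluding order $t^{\delta}$ jumps within a unit interval, the polynomial prefactor $t$ being absorbed into $C_1e^{-C_2t^{\delta}}$ for $t>t_0$. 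Your treatment of the running supremum of $Y$ by excursions is fine in spirit, but it only becomes available after a correct domination, and monotonicity in $\preceq$ is precisely the tool that neutralises the blocking you could not control directly.
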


Recalling the state space \eqref{omega}, we note that there is a  partial order on $\Omega_{Z}:$ We define
\begin{equation}\label{order}
\eta^{\prime}\preceq\eta^{\prime\prime}\iff \sum_{j=r}^{\infty}1-\eta^{\prime\prime }(j)\leq \sum_{j=r}^{\infty}1-\eta^{\prime}(j)\quad \mathrm{for \, all\,}r\in \Z.
\end{equation}
The following Lemma, taken from \cite{N19} and amended to contain a statement about holes, will be used  to bound the position of the  leftmost particle and the rightmost hole of ASEPs in $\Omega_{Z}.$

\begin{lem}[Lemma 3.1 in \cite{N19}]\label{lem}
Let   $\eta^{\prime},\eta^{\prime\prime}\in \Omega_{Z}$ and consider the basic coupling of two ASEPs $(\eta_{\ell}^{\prime})_{\ell\geq 0},(\eta^{\prime\prime}_{\ell})_{\ell\geq 0}$ started from  $\eta_{0}^{\prime}=\eta^{\prime},\eta^{\prime\prime}_{0}=\eta^{\prime\prime}$.   For $s\geq 0,$ denote   by $x_{0}^{\prime}(s),x_{0}^{\prime\prime}(s)$ the position of the leftmost particle of $\eta_{s}^{\prime},\eta^{\prime\prime}_{s},$  and by $H_{0}^{\prime}(s),H_{0}^{\prime\prime}(s)$ the  position of the rightmost hole of $\eta_{s}^{\prime},\eta^{\prime\prime}_{s}.$
Then, if $\eta^{\prime}\preceq\eta^{\prime\prime}$, we have $x_{0}^{\prime}(s)\leq x_{0}^{\prime\prime}(s)$ and $H_{0}^{\prime}(s)\geq H_{0}^{\prime\prime}(s).$
\end{lem}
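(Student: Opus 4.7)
The plan is to reformulate the partial order $\preceq$ in terms of sorted particle and hole positions, verify that this sorted ordering is preserved by the basic coupling, and read off the $n=0$ case. Since $\eta',\eta''\in\Omega_Z$, each differs from $\mathbf{1}_{\Z_{\geq Z}}$ at only finitely many sites, and in particular $\eta'$ and $\eta''$ agree outside some common finite window, inside of which they contain the same number of particles. Label the particles of $\eta\in\Omega_Z$ from the left as $x_0(\eta)<x_1(\eta)<\cdots$ with $x_0(\eta)$ the leftmost, and the holes from the right as $H_0(\eta)>H_1(\eta)>\cdots$ with $H_0(\eta)$ the rightmost. A direct manipulation of the tail sums $\sum_{j\geq r}(1-\eta(j))$ shows that $\eta'\preceq\eta''$ is equivalent to $x_n(\eta')\leq x_n(\eta'')$ for every $n\geq 0$, and equivalently to $H_n(\eta')\geq H_n(\eta'')$ for every $n\geq 0$; the conclusion of the Lemma is the special case $n=0$ of both chains of inequalities.

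It therefore suffices to prove that this sorted ordering is preserved under the basic coupling for all $s\geq 0$. I would carry this out by introducing the finite-valued height function
\[
h_\eta(r)=\sum_{j\geq r}\bigl(\mathbf{1}_{j\geq Z}-\eta(j)\bigr),\qquad r\in\Z,
\]
and noting that $\eta'\preceq\eta''$ is equivalent to $h_{\eta''}(r)\leq h_{\eta'}(r)$ for all $r\in\Z$. A single edge clock of the basic coupling can change $h_{\eta'}-h_{\eta''}$ only at the coordinate $r=j+1$ (for the edge $(j,j+1)$), and by at most one unit. A short case analysis shows that the only situation in which this change would threaten the inequality is when exactly one of the two systems performs the move across that edge; but in that case, the preservation of the inequality at the adjacent coordinates $r=j$ or $r=j+2$ (combined with the local recursion $h_\eta(r)=h_\eta(r+1)+\mathbf{1}_{r\geq Z}-\eta(r)$) forces the discrepancy at $r=j+1$ to be at least $1$, so the new difference remains nonnegative.

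The main obstacle is precisely this last step: the usual attractiveness of ASEP preserves the coordinatewise order $\eta'\leq\eta''$, whereas $\preceq$ is a different, height-function-based partial order that is not comparable to the coordinatewise one. Handling it requires the explicit case analysis above, in which one must track two neighbouring heights simultaneously rather than a single coordinate. Once this monotonicity is in hand, the Lemma follows immediately by combining it with the equivalence in the first paragraph.
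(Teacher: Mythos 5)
The paper does not actually prove this Lemma: it is imported verbatim (with the added statement about holes) from Lemma 3.1 of \cite{N19}, so there is no in-paper argument to compare against. Judged on its own, your proof is correct and complete in outline. The reformulation of $\preceq$ as the coordinatewise ordering of the sorted particle positions (equivalently of the sorted hole positions) follows from the identity $\sum_{j\geq r}(1-\eta(j))=\sum_{j<r}\eta(j)-r+Z$ valid on $\Omega_Z$, and the $n=0$ case is exactly the claim of the Lemma. The heart of the matter is the monotonicity of the height ordering under the basic coupling, and your case analysis is the right one: the difference $D(r)=h_{\eta'}(r)-h_{\eta''}(r)$ can only change at $r=j+1$ when a clock at edge $(j,j+1)$ rings, the only dangerous events are a rightward jump performed by $\eta'$ alone or a leftward jump performed by $\eta''$ alone, and in either case the recursion $D(j+1)=D(j)+\eta'(j)-\eta''(j)=D(j+2)+\eta''(j+1)-\eta'(j+1)$ together with $D(j),D(j+2)\geq 0$ and the blocking condition on the non-jumping system forces $D(j+1)\geq 1$ beforehand, so nonnegativity survives. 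Two small points worth making explicit in a written-up version: (i) since both configurations lie in $\Omega_Z$, at every time only finitely many edges are ``active'' for the coupled pair, so the evolution is almost surely a discrete sequence of single-edge events and the jump-by-jump induction is legitimate; (ii) your equivalence argument should note that the counting functions determine the order statistics in both directions (taking $r$ equal to a particle or hole position plus or minus one), which is what makes the sorted and tail-sum formulations match.
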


Finally, we will use the following proposition, which bounds the position of the leftmost particle/rightmost hole in a certain  countable state space  ASEP.   Using the results on mixing times of \cite{BBHM}, we know the leading order of the time   it takes this ASEP  to reach the reversed step initial data and 
once ASEP has hit the reversed step initial data, we can use Proposition \ref{blockp} to bound the position of the leftmost particle/rightmost hole in ASEP. This leads to the following.

\begin{prop}[Proposition 3.3 in \cite{N19}]\label{DOIT}
Let $a,b,N\in\Z$ and $a\leq b\leq N.$
Consider the ASEP $(\eta^{a,b,N}_{\ell})_{\ell\geq 0}$ with initial data 
\begin{equation}\label{abN}
\Omega_{N-b+a}\ni\eta^{a,b,N}_{0}=\mathbf{1}_{\{a,\ldots,b\}}+\mathbf{1}_{\Z_{\geq N+1}}
\end{equation}
and denote by $x_{0}^{a,b,N}(s)$ the position of the leftmost particle of  $\eta^{a,b,N}_{s}$.
Let $\mathcal{M}=\max\{b-a+1,N-b\}$ and $\varepsilon>0$. Then there are constants $C_{1},C_{2}$ (depending on $p$) and a constant $K$ (depending on $p,\varepsilon$)
so that for $s>K\mathcal{M}$ and $R\in \Z_{\geq 1}$ 
\begin{equation*}
\Pb\left(x_{0}^{a,b,N}(s)<N-b+a-R\right)\leq \frac{\varepsilon}{\mathcal{M}}+ C_{1}e^{-C_{2}R}.
\end{equation*}
Likewise, denoting by $H_{0}^{a,b,N}(s)$ the position of the rightmost hole of  $\eta^{a,b,N}_{s},$ we have
\begin{equation*}
\Pb\left(H_{0}^{a,b,N}(s)>N-b+a+R\right)\leq \frac{\varepsilon}{\mathcal{M}}+ C_{1}e^{-C_{2}R}.
\end{equation*}
\end{prop}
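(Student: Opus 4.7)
The first bound, on the leftmost particle $x_0^{a,b,N}(s)$, is exactly the statement of Proposition 3.3 in \cite{N19} and requires no rework; its proof in that paper proceeds by sandwiching $\eta_0^{a,b,N}$ in the partial order \eqref{order} between configurations whose ASEP dynamics are controlled, invoking the mixing time estimate of \cite{BBHM} after time of order $K\mathcal{M}$ to guarantee closeness to the relevant blocking-type measure, and then using Proposition \ref{blockp} together with Lemma \ref{lem} to rule out a large leftward excursion of the leftmost particle.

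The new ingredient is the bound on the rightmost hole $H_0^{a,b,N}(s)$, which I would deduce from the first inequality via particle-hole duality combined with spatial reflection. Setting
\begin{equation*}
\tilde{\eta}_s(j) := 1-\eta^{a,b,N}_s(-j),
\end{equation*}
one observes that $\tilde{\eta}$ is again an ASEP with rates $p,q$: the holes in the original process jump left at rate $p$ and right at rate $q$, and the reflection $j\mapsto -j$ turns this into rate-$p$ right and rate-$q$ left jumps, so the Poisson clocks of $\tilde{\eta}$ can be taken to be the space-reflected clocks of $\eta^{a,b,N}$.

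A short direct computation identifies the initial data of $\tilde{\eta}$ as $\eta^{a',b',N'}_0$ of the form \eqref{abN} with $(a',b',N')=(-N,-b-1,-a)$, and one checks that
\begin{equation*}
\mathcal{M}':=\max\{b'-a'+1,\,N'-b'\}=\max\{N-b,\,b-a+1\}=\mathcal{M},
\end{equation*}
so the constants $K,C_1,C_2$ from the leftmost particle bound apply verbatim to $\tilde{\eta}$. Since by construction the rightmost hole of $\eta^{a,b,N}_s$ is the negative of the leftmost particle of $\tilde{\eta}_s$, translating the leftmost particle inequality for $\tilde{\eta}$ through this identification (up to a harmless shift of $R$ by one, which only tightens the exponential) yields the claimed bound on $\Pb(H_0^{a,b,N}(s)>N-b+a+R)$.

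The only care needed is bookkeeping: checking that duality really produces an ASEP with the same jump rates, that the reflected initial data is again of the form \eqref{abN}, and that $\mathcal{M}$ is preserved under $(a,b,N)\mapsto(-N,-b-1,-a)$. I do not anticipate any serious obstacle; all the substantial analytic input, namely the mixing time bound of \cite{BBHM} and the blocking-measure tail bound of Proposition \ref{blockp}, is already packaged inside the leftmost particle bound that is being invoked.
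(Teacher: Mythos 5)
Your proposal is correct and follows exactly the route the paper takes: the leftmost-particle bound is quoted verbatim from Proposition 3.3 of \cite{N19}, and the rightmost-hole bound is deduced by particle--hole duality combined with the spatial reflection $j\mapsto -j$, which maps $\eta^{a,b,N}$ to an ASEP of the same form with $(a',b',N')=(-N,-b-1,-a)$ and preserves $\mathcal{M}$. Your bookkeeping (including the favorable shift of $R$ by one) checks out, so nothing further is needed.
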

We note that after \cite{BBHM}, the paper  \cite{LL19} proved the cutoff phenomenon for ASEP/biased card shuffling, and in particular obtained the exact asymptotics for the mixing time.
However, the results of  \cite{BBHM} are sufficient for our purposes.

\section{Limit laws for $\mathcal{H},\mathcal{P}$ and $\mathcal{H}-\mathcal{P}$: Proof of Theorem \ref{Theorem3} }\label{4}
We start by computing the limit law for  $\mathcal{H},\mathcal{P}.$ The main step in the proof is to show that we may replace $x_{n},H_{n}$ by $x_{n}^{A},H_{n}^{B}$
from Section \ref{compare}.
\begin{prop}\label{convp}
Let $\mathcal{H},\mathcal{P}$ be defined as in \eqref{PH}.
We have for  $L\in \Z_{\geq 1}$
\begin{align*}
&\lim_{t\to \infty}\Pb(\mathcal{H}=L)=\lim_{t\to \infty}\Pb(\mathcal{P}=L)=F_{L,p}(C)-F_{L+1,p}(C)
\\&\lim_{t\to \infty}\Pb(\mathcal{H}<0)=\lim_{t\to \infty}\Pb(\mathcal{P}<0)=0
\\&\lim_{t\to \infty}\Pb(\mathcal{H}=0)=\lim_{t\to \infty}\Pb(\mathcal{P}=0)=1-F_{1,p}(C).
\end{align*}
\end{prop}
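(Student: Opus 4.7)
By the particle-hole duality of ASEP (combining a swap of $0$'s and $1$'s with a spatial reflection), the configuration \eqref{IC}/\eqref{ICH} is self-dual under $x_n\leftrightarrow H_n$, and this symmetry exchanges $\mathcal{P}$ with $\mathcal{H}$. Hence it suffices to establish the three limits for $\mathcal{P}$; the other limits then follow by differencing once we prove $\lim_{t\to\infty}\Pb(\mathcal{P}\geq L)=F_{L,p}(C)$ for all $L\geq 1$ together with $\lim_{t\to\infty}\Pb(\mathcal{P}\geq 0)=1$.

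The upper bound $\limsup_{t\to\infty}\Pb(\mathcal{P}\geq L)\leq F_{L,p}(C)$ uses only monotonicity. The ASEP $\eta^A$ of Section \ref{compare} has the same lower particles as $\eta$ but lacks the ``extras''---the second-class particle at $0$ and the upper block $\{1,\ldots,\lfloor(p-q)(t-Ct^{1/2})\rfloor\}$. Since $\eta_0\geq \eta^A_0$ componentwise, the basic coupling and attractivity of ASEP give $x_L(s)\leq x_L^A(s)$ for all $L\geq 1$ and $s\geq 0$, so $\Pb(\mathcal{P}\geq L)\leq \Pb(x_L^A(t-t^\chi)>-t^{\chi'})$. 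Since $\eta^A$ is a spatial shift of step initial data, writing $x_L^A=y_L-\lfloor(p-q)(t-Ct^{1/2})\rfloor$ with $y_L$ an ordinary step-initial-data particle and casting the event as $\{y_L(\tau)\geq(p-q)(\tau-s(t)\tau^{1/2})\}$ with $\tau=t-t^\chi$, one verifies $s(t)\to C$ as $t\to\infty$ using $\chi,\chi'<1/2$; Theorem \ref{TWT} then identifies the limit as $F_{L,p}(C)$.

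The matching lower bound is the main work. The key estimate is that the leftmost of the extras at time $t-t^\chi$ lies above $-\tfrac12 t^{\chi'}$ with probability $\to 1$. Choosing $\chi'>1/3$---permissible since the limits are independent of $(\chi,\chi')$ satisfying $0<\chi<\chi'<1/2$---this bound is consistent with the $O(t^{1/3})$ KPZ fluctuations of the macroscopic hard shock at the origin, and is established via Proposition \ref{DOIT}, after adapting to the fact that in $\eta$ the semi-infinite block (of lower particles) sits on the left rather than the right as in Proposition \ref{DOIT}. On the resulting good event, two cases arise: if $x_L^A(t-t^\chi)\leq -t^{\chi'}$ then by monotonicity $x_L\leq x_L^A\leq -t^{\chi'}$; otherwise $x_L$ either equals $x_L^A$ (when no blocking has yet occurred) or is blocked just below the leftmost extra and so exceeds $-t^{\chi'}$. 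In either case $\{x_L>-t^{\chi'}\}=\{x_L^A>-t^{\chi'}\}$ up to a null event, and the lower bound follows. Applying the same extras estimate to the leftmost upper-block particle $x_0$ yields $\Pb(\mathcal{P}\geq 0)\to 1$. The main technical obstacle is precisely this extras bound: bringing our configuration into the form required by Proposition \ref{DOIT}, either by a particle-hole duality or by a monotonicity comparison with a simpler initial data of the required form.
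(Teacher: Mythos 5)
Your overall architecture matches the paper's: reduce $\mathcal{H}$ to $\mathcal{P}$ by particle--hole duality, compare $x_L$ with the shifted-step particle $x_L^{A}$ of Section \ref{compare}, obtain the upper bound from the monotone coupling $x_L\leq x_L^{A}$ together with Theorem \ref{TWT}, and then argue that $\{x_L(t-t^{\chi})>-t^{\chi^{\prime}}\}$ and $\{x_L^{A}(t-t^{\chi})>-t^{\chi^{\prime}}\}$ agree up to events of vanishing probability. The duality step, the differencing reduction, and the upper bound are all fine.

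The gap is in your lower bound, which is the heart of the proposition. The dichotomy ``either $x_L$ equals $x_L^{A}$ (no blocking has yet occurred) or $x_L$ is blocked just below the leftmost extra and so exceeds $-t^{\chi^{\prime}}$'' is false as stated: once $x_L$ has interacted with the extras at some random time $\tau_L\leq t-t^{\chi}$, nothing forces it to remain adjacent to them afterwards --- it can detach and make further leftward excursions, so at the observation time $t-t^{\chi}$ you only know $x_L\neq x_L^{A}$, not that $x_L$ sits within $O(L)$ of the leftmost extra. Consequently, controlling the position of the extras (even uniformly over $[0,t-t^{\chi}]$, which is what Proposition \ref{blockp} delivers after a monotone comparison with reversed step data; Proposition \ref{DOIT} is not the natural tool, since it only applies after a mixing time $K\mathcal{M}$ and to initial data of the form \eqref{abN}) does not by itself control $x_L(t-t^{\chi})$ on the event that blocking has occurred. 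What is actually needed is the estimate
\begin{equation*}
\lim_{t\to\infty}\Pb\bigl(\{\tau_L\leq t-t^{\chi}\}\cap\{x_L(t-t^{\chi})\leq-(L+1)t^{\delta/2}\}\bigr)=0,
\end{equation*}
i.e.\ bound (56) of \cite{N19}, which the paper imports at exactly this point; its proof uses the blocking-measure/mixing machinery to show that, conditionally on having been blocked, $x_L$ is exponentially unlikely to end up far to the left at the final time. Your proposal asserts the conclusion of this estimate without supplying an argument for it. Two secondary points: your restriction to $\chi^{\prime}>1/3$ proves the statement only for a subfamily of the admissible $(\chi,\chi^{\prime})$ in \eqref{PH}, and the heuristic that the extras bound ``is consistent with $O(t^{1/3})$ KPZ fluctuations'' is misleading --- the leftmost extra is governed by the blocking measure and has exponential, not KPZ, tails, which is why a bound as strong as $t^{\delta}$ is available and needed.
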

\begin{proof}

Throughout the proof, we assume that all appearing ASEPs are coupled through the basic coupling. First we consider $L\geq1$. Let us deal with $\mathcal{P}$ first. 
Recall the ASEP \eqref{A}, and define

\begin{equation}\label{PA}
\begin{aligned}
&\mathcal{P}^{A}=\sup\{i\in \Z| x_{i}^{A}(t-t^{\chi})>-t^{\chi^{\prime}}\}.
\end{aligned}
\end{equation}
We have that
\begin{align*}
&\{\mathcal{P}^{A}=L\}=\{x_{L}^{A}(t-t^{\chi})>-t^{\chi^{\prime}}\}\cap\{x_{L+1}^{A}(t-t^{\chi})\leq -t^{\chi^{\prime}}\}
\\& \{\mathcal{P}=L\}=\{x_{L}(t-t^{\chi})>-t^{\chi^{\prime}}\}\cap\{x_{L+1}(t-t^{\chi})\leq -t^{\chi^{\prime}}\}.
\end{align*}
Since $x_{L+1}^{A}(t-t^{\chi})< x_{L}^{A}(t-t^{\chi})$ it follows  that 
\begin{equation*}
\lim_{t\to \infty}\Pb(\mathcal{P}^{A}=L)=\lim_{t\to\infty}\Pb(x_{L}^{A}(t-t^{\chi})>-t^{\chi^{\prime}})-\Pb(x_{L+1}^{A}(t-t^{\chi})>-t^{\chi^{\prime}}),
\end{equation*}
and as $\chi<\chi^{\prime}<1/2$ we can conclude from Theorem \ref{TWT} that
\begin{equation*}
\lim_{t\to \infty}\Pb(\mathcal{P}^{A}=L)= F_{L,p}(C)-F_{L+1,p}(C).
\end{equation*}
Thus to finish the proof, we may show that 
\begin{equation}\label{0+0}
\begin{aligned}
\lim_{t\to \infty}\Pb(\{\mathcal{P}^{A}=L\}\setminus \{\mathcal{P}=L\})+\Pb(\{\mathcal{P}=L\}\setminus \{\mathcal{P}^{A}=L\})=0.
\end{aligned}
\end{equation}
Since $x_{i}^{A}(t-t^{\chi})\geq x_{i}(t-t^{\chi}) $ for all $i\geq 1$, we have that
\begin{equation}\begin{aligned}\label{dsds}
& \{x_{L}(t-t^{\chi})>-t^{\chi^{\prime}}\}\subseteq \{x_{L}^{A}(t-t^{\chi})>-t^{\chi^{\prime}}\}
\\&\{x_{L}^{A}(t-t^{\chi})\leq-t^{\chi^{\prime}}\}\subseteq \{x_{L}(t-t^{\chi})\leq-t^{\chi^{\prime}}\}.
\end{aligned}\end{equation}
Because of \eqref{dsds}, in order to show \eqref{0+0}, it suffices to show
\begin{equation}\label{0=0}
\begin{aligned}
&\lim_{t\to \infty}\Pb(\{x_{L}^{A}(t-t^{\chi})>-t^{\chi^{\prime}}\}\setminus \{x_{L}(t-t^{\chi})>-t^{\chi^{\prime}}\})=0
\\&\lim_{t\to \infty}\Pb(\{x_{L+1}(t-t^{\chi})\leq-t^{\chi^{\prime}}\}\setminus \{x_{L+1}^{A}(t-t^{\chi})\leq -t^{\chi^{\prime}}\})=0.
\end{aligned}
\end{equation}
Now both statements of \eqref{0=0} follow from showing 
\begin{equation*}
\lim_{t\to\infty}\Pb(\{x_{i}^{A}(t-t^{\chi})\neq x_{i}(t-t^{\chi})\}\cap \{x_{i}(t-t^{\chi})\leq-t^{\chi^{\prime}}\})=0
\end{equation*}
for arbitrary $i\in \Z_{\geq 1}$. Define the random times 
\begin{equation*}
\tau_{i}=\inf \{ \ell |\,x_{i}^{A}(\ell)\neq  x_{i}(\ell)\}.
\end{equation*}
Then it suffices to show
\begin{equation}
\lim_{t\to\infty}\Pb(\{ \tau_{i}\leq t-t^{\chi})\} \cap \{x_{i}(t-t^{\chi})\leq-t^{\chi^{\prime}}\})=0.
\end{equation}
Now it follows immediately from the bound (56) of \cite{N19}, that in fact for any $\delta>0$ we have
\begin{equation}\label{56}
\lim_{t\to\infty}\Pb(\{ \tau_{i}\leq t-t^{\chi})\} \cap \{x_{i}(t-t^{\chi})\leq-(i+1)t^{\delta/2}\})=0.
\end{equation}
The only inessential difference between the bound (56) of \cite{N19} and \eqref{56}  is that in  \cite{N19}, we look at time $t$  (rather than $t-t^{\chi}$) and   the ASEP  $\eta^{1}$ is considered (i.e. the second class particle is replaced by a particle).
Since \eqref{56} implies \eqref{0=0} which implies \eqref{0+0}, we conclude
\begin{equation*}
\lim_{t\to \infty}\Pb(\mathcal{P}=L)= F_{L,p}(C)-F_{L+1,p}(C).
\end{equation*}

Next we treat $\Pb(\mathcal{P}<0)$. Note that we may bound
\begin{align*}
\Pb(\mathcal{P}<0)\leq \Pb(x_{0}(t-t^{\chi})\leq-t^{\chi^{\prime}})\leq  \Pb(x_{0}^{-\mathrm{step}(1)}(t-t^{\chi})\leq-t^{\chi^{\prime}}),
\end{align*}
so we obtain from Proposition \ref{blockp} that $\lim_{t\to\infty}\Pb(\mathcal{P}<0)=0.$

Finally, to treat $L=0$, note that because $\lim_{t\to\infty} \Pb(x_{0}(t-t^{\chi})>-t^{\chi^{\prime}})=1,$ we  get
 \begin{equation}\label{23}
 \lim_{t\to \infty}\Pb(\mathcal{P}=0)=\lim_{t\to\infty}\Pb(x_{0}(t-t^{\chi})>-t^{\chi^{\prime}})-\Pb(x_{1}(t-t^{\chi})>-t^{\chi^{\prime}})=1-F_{1,p}(C),
 \end{equation}
 where we used that we may replace $x_{1}(t-t^{\chi})$ by   $x_{1}^{A}(t-t^{\chi})$ in \eqref{23} because of \eqref{56}.
 
 The statements for $\mathcal{H}$ can be proven similarly : One introduces  
\begin{equation}\label{HB}
\begin{aligned}
&\mathcal{H}^{B}=\sup\{i\in \Z| H_{i}^{B}(t-t^{\chi})<t^{\chi^{\prime}}\},
\end{aligned}
\end{equation}
and uses the particle-hole duality.
\end{proof}

A corollary, we can show that $-\mathcal{H},-\mathcal{P}$ converge to $F_\GUE$ in a double limit under KPZ scaling.

\begin{cor} \label{corgue}
Let $C=C(M)$ be as in \eqref{C}  and let $s \in \R$.
Then 
\begin{align*}
&\lim_{M\to \infty}\lim_{t \to \infty}\Pb\left(\frac{\mathcal{H}-M}{M^{1/3}}\leq s\right)
\\&=\lim_{M\to \infty}\lim_{t \to \infty}\Pb\left(\frac{\mathcal{P}-M}{M^{1/3}}\leq s\right)=1-F_{\GUE}(-s).
\end{align*}
\end{cor}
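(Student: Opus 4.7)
The plan is to reduce the corollary to a telescoping computation together with the asymptotics of $F_{M,p}$ from Proposition \ref{FGUE}. I will only work out the case of $\mathcal{P}$; the case of $\mathcal{H}$ is identical after invoking the particle–hole version of Proposition \ref{convp}.

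First, since for each fixed integer $L\geq 0$ the event $\{\mathcal{P}\leq L\}$ is a finite disjoint union, I would sum the identities of Proposition \ref{convp}:
\begin{equation*}
\lim_{t\to\infty}\Pb(\mathcal{P}\leq L)
=\lim_{t\to\infty}\Pb(\mathcal{P}<0)+\sum_{k=0}^{L}\lim_{t\to\infty}\Pb(\mathcal{P}=k)
=1-F_{L+1,p}(C),
\end{equation*}
where the last equality is a one-line telescoping using $\lim_t\Pb(\mathcal P<0)=0$, $\lim_t\Pb(\mathcal P=0)=1-F_{1,p}(C)$ and $\lim_t\Pb(\mathcal P=k)=F_{k,p}(C)-F_{k+1,p}(C)$ for $k\geq 1$. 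This identifies the inner limit: taking $L=L_M:=\lfloor M+sM^{1/3}\rfloor$ with $C=C(M)=2\sqrt{M/(p-q)}$ yields
\begin{equation*}
\lim_{t\to\infty}\Pb\!\left(\frac{\mathcal{P}-M}{M^{1/3}}\leq s\right)=1-F_{L_M+1,p}\!\left(\tfrac{2\sqrt M}{\sqrt{p-q}}\right).
\end{equation*}

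Next I would match this to Proposition \ref{FGUE} by a change of scaling variable. Proposition \ref{FGUE} applied at index $L_M+1$ says
\begin{equation*}
F_{L_M+1,p}\!\left(\tfrac{2\sqrt{L_M+1}+\tilde s\,(L_M+1)^{-1/6}}{\sqrt{p-q}}\right)\xrightarrow[M\to\infty]{}F_{\GUE}(\tilde s),
\end{equation*}
so I solve for the $\tilde s=\tilde s_M$ that makes the argument equal $2\sqrt M/\sqrt{p-q}$, namely $\tilde s_M=(L_M+1)^{1/6}\bigl(2\sqrt M-2\sqrt{L_M+1}\bigr)$. A Taylor expansion of $\sqrt{M+sM^{1/3}+O(1)}$ around $\sqrt M$ gives $\sqrt{L_M+1}=\sqrt M+\tfrac{s}{2}M^{-1/6}+O(M^{-5/6})$, whence $\tilde s_M=-s+O(M^{-2/3})$. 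Combining this with the convergence above and continuity of $F_{\GUE}$ yields
\begin{equation*}
F_{L_M+1,p}\!\left(\tfrac{2\sqrt M}{\sqrt{p-q}}\right)\xrightarrow[M\to\infty]{}F_{\GUE}(-s),
\end{equation*}
which proves the $\mathcal P$–statement.

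The only non-routine point I expect is justifying that Proposition \ref{FGUE} can be evaluated at the drifting argument $\tilde s_M\to -s$ rather than at a fixed $\tilde s$. This is handled by a standard monotonicity/continuity argument: $F_{L_M+1,p}$ is non-decreasing in its real argument and the candidate limit $F_{\GUE}$ is continuous, so pointwise convergence upgrades to locally uniform convergence and absorbs the $O(M^{-2/3})$ perturbation in $\tilde s_M$. Everything else is bookkeeping, and the proof for $\mathcal H$ is word-for-word the same using $\mathcal H^B$ and the hole version of Proposition \ref{convp}.
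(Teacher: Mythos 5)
Your proof is correct and follows essentially the same route as the paper: telescope the identities of Proposition \ref{convp} to get $\lim_{t\to\infty}\Pb(\mathcal{P}\leq L)=1-F_{L+1,p}(C)$, evaluate at $L=\lfloor M+sM^{1/3}\rfloor$, and Taylor-expand to recast $C(M)$ as the Proposition \ref{FGUE} scaling with argument $-s+o(1)$, absorbing the drift via monotonicity and continuity of $F_{\GUE}$. (The only quibble is bookkeeping: the floor contributes an $O(M^{-1/3})$ rather than $O(M^{-2/3})$ error to $\tilde s_M$, which is immaterial since only $\tilde s_M\to-s$ is needed.)
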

\begin{proof}
By Proposition \ref{convp}, it  suffices to consider $\mathcal{H}$ and we have that 
\begin{equation}
\lim_{t \to \infty}\Pb\left(\frac{\mathcal{H}-M}{M^{1/3}}\leq s\right)=1-F_{\lfloor M+M^{1/3}s \rfloor+1,p}(C(M)).
\end{equation}
Upon setting $\tilde{M}=\lfloor M+M^{1/3}s \rfloor+1$ we get that \begin{equation*}C(M)=\frac{2\sqrt{\tilde{M}}-s\tilde{M}^{-1/6}}{\sqrt{p-q}}+o(\tilde{M}^{-1/6}).\end{equation*}
The statement follows now from Proposition \ref{FGUE}.
\end{proof}
Finally, we show that $\mathcal{H},\mathcal{P}$ are independent asymptotically. The first step in the proof is again to replace $x_{n}, H_{n}$ by $x_{n}^{A}, H_{n}^{B} $ and
then apply slow decorrelation to the $x_{n}^{A}$ to make sure  that $x_{n}^{A}$ and $H_{n}^{B}$ stay in disjoint space-time regions, hence they are asymptotically  independent.
\begin{prop}\label{indep}
We have for $R,L\in \Z$
\begin{equation*}
\lim_{t\to \infty}\Pb\left(\{\mathcal{H}=R\}\cap \{\mathcal{P}=L\}\right)=\lim_{t\to \infty}\Pb(\mathcal{H}=R) \Pb(\mathcal{P}=L).
\end{equation*}
\end{prop}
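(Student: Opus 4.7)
The strategy is to first replace $(\mathcal{P},\mathcal{H})$ by the shifted-step-initial-data analogues $(\mathcal{P}^A,\mathcal{H}^B)$ already introduced for Proposition~\ref{convp}, and then use slow decorrelation to push the observation time backwards to a moment at which the relevant observables live in two widely separated spatial regions, where asymptotic independence becomes transparent.

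Bound (56) of \cite{N19}, already invoked in the proof of Proposition~\ref{convp}, yields $\lim_{t\to\infty}\Pb(\mathcal{P}\neq\mathcal{P}^A)=\lim_{t\to\infty}\Pb(\mathcal{H}\neq\mathcal{H}^B)=0$, so it suffices to prove asymptotic independence of $(\mathcal{P}^A,\mathcal{H}^B)$. Fix a small $\eps>0$ and set $\tau=t-t^{1/2+\eps}$ (so $\tau<t-t^\chi$ for $t$ large). At time $\tau$, the rightmost particle of $\eta^A$ is at position approximately $-(p-q)t^{1/2+\eps}$, still far to the left of the origin; symmetrically, the leftmost hole of $\eta^B$ is near $+(p-q)t^{1/2+\eps}$. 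By the slow decorrelation method of \cite{Fer08,CFP10b}, applied to the tagged particle $x_L^A$ and the tagged hole $H_R^B$ along their characteristics, the $\sqrt{t}$-scale fluctuations of $x_L^A(t-t^\chi)$ and $x_L^A(\tau)$ agree up to an $o(\sqrt t)$ error, and likewise for $H_R^B$. Consequently $\{\mathcal{P}^A=L\}$ can be rewritten, with vanishing error, as the event that $x_L^A(\tau),x_{L+1}^A(\tau)$ straddle a shifted threshold of order $-(p-q)t^{1/2+\eps}$, and symmetrically for $\{\mathcal{H}^B=R\}$ near $+(p-q)t^{1/2+\eps}$.

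After this rewriting, the two events depend only on particles of $\eta^A$ in a bounded neighborhood of $-(p-q)t^{1/2+\eps}$ and on holes of $\eta^B$ in a bounded neighborhood of $+(p-q)t^{1/2+\eps}$. A standard large-deviation estimate on particle displacements for ASEP (in the spirit of Proposition~\ref{blockp}) shows that with probability $1-o(1)$ the histories of these observables on $[0,\tau]$ are contained in disjoint spacetime regions, so under the basic coupling they are measurable with respect to disjoint families of Poisson clocks and are exactly independent on this high-probability event. This yields the claim. The main obstacle is the slow-decorrelation step: since the marginal fluctuations here are Gaussian on scale $\sqrt t$ rather than KPZ on scale $t^{1/3}$, one must verify carefully that the slow-decorrelation error is $o(\sqrt t)$ and that it transfers from statements about single tagged particles to the joint threshold-crossing events defining $\mathcal{P}^A$ and $\mathcal{H}^B$, while simultaneously choosing $\tau$ so that both $\tau<t-t^\chi$ and the spatial separation $\sim t^{1/2+\eps}$ are large enough to force the Poisson-clock decoupling.
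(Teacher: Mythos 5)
Your proposal is correct and follows essentially the same route as the paper: replace $(\mathcal{P},\mathcal{H})$ by $(\mathcal{P}^A,\mathcal{H}^B)$, use slow decorrelation to move the observation time back by $t^{\kappa}$ with $\kappa\in(1/2,1)$ so that the relevant tagged particle/hole positions (and their histories) lie in disjoint space-time regions, and then decouple via the independence of the underlying Poisson clocks, implemented through auxiliary "tilde" random variables that agree with the originals with probability tending to one. The only cosmetic difference is that you shift both the particle and the hole back to a common earlier time $\tau=t-t^{1/2+\eps}$, whereas the paper shifts only the particles $x^A_L,x^A_{L+1}$ and keeps the holes at time $t-t^{\chi}$; both choices yield the required spatial separation.
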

\begin{proof}
We assume that all appearing ASEPs are coupled via the basic coupling.
We may assume $R,L \geq 0$ because otherwise the statement is trivial due to Proposition \ref{convp}.
Recall $ \mathcal{H}^{B}$ from \eqref{HB} and $\mathcal{P}^{A}$ from \eqref{PA}.
Then it follows from \eqref{0+0} and the analogous statement for $\mathcal{H},\mathcal{H}^{B}$  that 
\begin{equation*}
\begin{aligned}
\lim_{t\to \infty}\Pb\left(\{\mathcal{H}=R\}\cap \{\mathcal{P}=L\}\right)=\lim_{t\to \infty}\Pb\left(\{\mathcal{H}^{B}=R\}\cap \{\mathcal{P}^{A}=L\}\right).
\end{aligned}
\end{equation*}

For brevity, let us define \begin{align*}&A_{1,L}=\{x_{L}^{A}(t-t^{\chi})>-t^{\chi^{\prime}}\},\\&A_{2,L}=\{x_{L+1}^{A}(t-t^{\chi})\leq-t^{\chi^{\prime}}\}.\end{align*}

Let $\kappa\in (1/2,1)$  and define the events
\begin{equation*}
\begin{aligned}
&A_{3,L}=\{x_{L}^{A}(t-t^{\chi}-t^{\kappa})+(p-q)t^{\kappa}>-t^{\chi^{\prime}}\},\\& A_{4,L}= \{x_{L+1}^{A}(t-t^{\chi}-t^{\kappa})+(p-q)t^{\kappa}\leq-t^{\chi^{\prime}}\}, \\&I_{L}=A_{3,L}\cap A_{4,L}.
\end{aligned}
\end{equation*}

We will  prove
\begin{equation}\label{0+0+0}
\begin{aligned}
\lim_{t\to \infty}\Pb(\{\mathcal{P}^{A}=L\}\setminus I_{L})+\Pb(I_{L}\setminus \{\mathcal{P}^{A}=L\})=0.
\end{aligned}
\end{equation}

To prove \eqref{0+0+0}, it suffices to show 
\begin{equation}\label{4sum}
\lim_{t\to\infty}\Pb(A_{1,L}\setminus A_{3,L})+\Pb(A_{3,L}\setminus A_{1,L})+\Pb(A_{2,L}\setminus A_{4,L})+\Pb(A_{4,L}\setminus A_{2,L})=0.
\end{equation}

By the slow-decorrelation statement (Proposition 4.2 in \cite{N19} with $t=t-t^{\chi}$) we have that for $\varepsilon>0$

\begin{equation}\label{slowdec}
\lim_{t\to\infty}\Pb(|x_{L}^{A}(t-t^{\chi}-t^{\kappa})+(p-q)t^{\kappa}-x_{L}^{A}(t-t^{\chi})|\geq \varepsilon t^{1/2})=0.
\end{equation}
Now, using \eqref{slowdec}, we  have for any $\varepsilon>0$
\begin{equation*}
\begin{aligned}
\lim_{t\to\infty}\Pb(A_{1,L}\setminus A_{3,L})&\leq \lim_{t\to \infty}\Pb(x_{L}^{A}(t-t^{\chi}-t^{\kappa})+(p-q)t^{\kappa}\in [-t^{\chi^{\prime}}-\varepsilon t^{1/2},-t^{\chi^{\prime}}])\\&=F_{L,p}(C+\varepsilon)-F_{L,p}(C).
\end{aligned}
\end{equation*}
Sending $\varepsilon \to 0$ shows the desired convergence to zero. The other three limits in \eqref{4sum} are completely analogous.

We have thus shown that 
\begin{equation}\label{stage}
\lim_{t\to \infty}\Pb\left(\{\mathcal{H}=R\}\cap \{\mathcal{P}=L\}\right)=\lim_{t \to \infty}\Pb\left(\{\mathcal{H}^{B}=R\}\cap I_{L} \right).
\end{equation}

The final step is to construct random variables \begin{equation}\label{tilderv}\tilde{H}^{B}_{R}(t-t^{\chi}),\tilde{H}^{B}_{R+1}(t-t^{\chi}), \tilde{x}_{L}^{A}(t-t^{\chi}-t^{\kappa}),\tilde{x}_{L+1}^{A}(t-t^{\chi}-t^{\kappa})\end{equation}
such that   $\tilde{H}^{B}_{R}(t-t^{\chi}),\tilde{H}^{B}_{R+1}(t-t^{\chi})$ are independent from $ \tilde{x}_{L}^{A}(t-t^{\chi}-t^{\kappa}),\tilde{x}_{L+1}^{A}(t-t^{\chi}-t^{\kappa})$
and  having a tilde or not makes no difference asymptotically :
\begin{equation}
\begin{aligned}\label{tilde}
&\lim_{t\to \infty}\Pb(\tilde{H}^{B}_{R}(t-t^{\chi})\neq H^{B}_{R}(t-t^{\chi}))+\Pb(\tilde{H}^{B}_{R+1}(t-t^{\chi})\neq H^{B}_{R+1}(t-t^{\chi}))=0
\\&\lim_{t\to\infty}\Pb(\tilde{x}^{B}_{L}(t-t^{\chi}-t^{\kappa})\neq x^{B}_{L}(t-t^{\chi}-t^{\kappa}))=0\\&\lim_{t\to\infty}\Pb(\tilde{x}^{B}_{L+1}(t-t^{\chi}-t^{\kappa})\neq x^{B}_{L+1}(t-t^{\chi}-t^{\kappa}))=0.
\end{aligned}
\end{equation}
The construction of the random variables   \eqref{tilderv}
is based on the observation that  $H^{B}_{R}(t-t^{\chi}),H^{B}_{R+1}(t-t^{\chi})$ and $ x_{L}^{A}(t-t^{\chi}-t^{\kappa}),x_{L+1}^{A}(t-t^{\chi}-t^{\kappa})$ stay in disjoint, deterministic  space time-regions with probability 
going to one.  Their construction is done in detail between formulas (41) and (42) of \cite{N19}, the proof of \eqref{tilde} is (42) in \cite{N19}, (with the inessential difference that in \cite{N19}, we have  $t$ instead of  $t-t^{\chi}$).
 Define 
 \begin{equation*}
\begin{aligned}
&\tilde{A}_{3,L}=\{\tilde{x}_{L}^{A}(t-t^{\chi}-t^{\kappa})+(p-q)t^{\kappa}>-t^{\chi^{\prime}}\},\\& \tilde{A}_{4,L}= \{\tilde{x}_{L+1}^{A}(t-t^{\chi}-t^{\kappa})+(p-q)t^{\kappa}\leq-t^{\chi^{\prime}}\}, \\&\tilde{I}_{L}=\tilde{A}_{3,L}\cap \tilde{A}_{4,L},
\\& \tilde{J}_{R} =\{\tilde{H}^{B}_{R}(t-t^{\chi})<t^{\chi^{\prime}}\}   \cap \{\tilde{H}^{B}_{R+1}(t-t^{\chi})\geq t^{\chi^{\prime}}\} .
\end{aligned}
\end{equation*}

 We can thus conclude by computing 
 \begin{align*}
 \lim_{t \to \infty}\Pb\left(\{\mathcal{H}^{B}=R\}\cap I_{L} \right)&=\lim_{t \to \infty}\Pb\left(   \tilde{J}_{R}  \cap \tilde{I}_{L} \right)
 \\&=\lim_{t \to \infty}\Pb(   \tilde{J}_{R}  )\Pb( \tilde{I}_{L} )
  \\&=\lim_{t\to \infty}\Pb(\mathcal{H}=R) \Pb(\mathcal{P}=L),
 \end{align*}
 where for the last identity we first used \eqref{tilde} -thus removing the $\tilde{}$\, -,  then  \eqref{0+0+0} -thus replacing $x_{L}^{A}(t-t^{\chi}-t^{\kappa})+(p-q)t^{\kappa}$ by $x_{L}^{A}(t-t^{\chi})$ - and finally  \eqref{0+0}\footnote{and the analogue of \eqref{0+0}  for $\mathcal{H},\mathcal{H}^{B}$} -thus replacing $\mathcal{H}^{B},\mathcal{P}^{A}$ by $\mathcal{H},\mathcal{P} $. 
 By \eqref{stage}, this finishes the proof.

\end{proof}

\begin{proof}[Proof of Theorem \ref{Theorem3}]
It follows from Proposition \ref{convp} and \ref{indep} that  the pair $\left(\frac{\mathcal{H}-M}{M^{1/3}},  \frac{\mathcal{P}-M}{M^{1/3}}\right)$ converges, as $t \to \infty,$ in distribution to the product measure $\mu^{M}\otimes \mu^{M}$ on $M^{-1/3}\Z_{\geq-M}\times \Z_{\geq -M}$  given by
$\mu^{M}(\{M^{-1/3}i\})=F_{M+i,p}(C)-F_{M+i+1,p}(C)$ with $F_{0,p}(C):=1$. By Corollary \ref{corgue}, for $C=C(M)$,  $\mu^{M}\otimes \mu^{M}$ converges, as $M\to \infty,$ in distribution to the product measure  $\mu^{-\GUE}\otimes \mu^{-\GUE}$ on $\R^{2}$ with $ \mu^{-\GUE}((-\infty,\xi]):=1-F_{\GUE}(-\xi)$ for $\xi \in \R$. This  implies the result by the continuous mapping theorem.
\end{proof}

\section{Proof of Theorem \ref{couplthm}: $|X(t)-\mathcal{H}+\mathcal{P}|M^{-\varepsilon}$ goes to zero }\label{5}
Let us start by recalling that  with $L,R\in \Z$
\begin{align*}
&\{\mathcal{H}=R\}=\{H_{R}(t-t^{\chi})<t^{\chi^{\prime}}\}\cap\{H_{R+1}(t-t^{\chi})\geq t^{\chi^{\prime}}\}
\\& \{\mathcal{P}=L\}=\{x_{L}(t-t^{\chi})>-t^{\chi^{\prime}}\}\cap\{x_{L+1}(t-t^{\chi})\leq -t^{\chi^{\prime}}\}.
\end{align*}
We define for $\delta\in (0,\chi)$   the events 
\begin{align*}
&B_L=\{x_{L}(t-t^{\chi})>-t^{\delta}\}\cap\{x_{L+1}(t-t^{\chi})\leq -t^{\chi^{\prime}}\}
\\& D_R=\{H_{R}(t-t^{\chi})<t^{\delta}\}\cap\{H_{R+1}(t-t^{\chi})\geq t^{\chi^{\prime}}\}.
\end{align*}
Since $\delta<\chi<\chi^{\prime},$ clearly we have
\begin{equation}\label{incl}
B_L \subseteq \{\mathcal{P}=L\} \quad 
D_R \subseteq \{\mathcal{H}=R\}.
\end{equation}
The strategy of the proof of Theorem \ref{couplthm} is as follows (see  Figure \ref{Graph} for an illustration):   We partition the probability space according to \begin{equation*}\bigcup_{L,R\in \Z}\{\mathcal{H}=R\}\cap \{\mathcal{P}=L\}.\end{equation*} It actually suffices to  consider $L,R\in \{0,\ldots,2M\}.$ We first show in Proposition  \ref{redprop}, that we may replace $\{\mathcal{H}=R\}\cap \{\mathcal{P}=L\}$
by $B_{L}\cap D_{R}$. On $B_{L}\cap D_{R},$ we then show that all the particles and holes  $x_{L+n},H_{R+n}, n \geq 1$ are irrelevant  asymptotically.
Hence we may replace $X(t)$ by a second class particle $\tilde{X}(t)$ which lives in an ASEP on $\Omega_{R-L}$. This statement appears as Proposition \ref{Xtild}.
Finally, in Proposition \ref{expprop}, we show that $X(t)$  is exponentially unlikely to be far away from $R-L$ because this is true for $\tilde{X}(t)$ .
 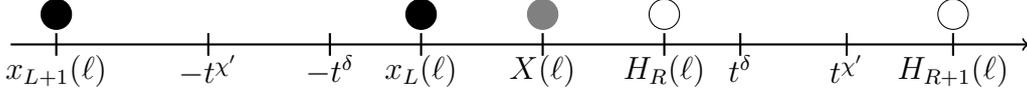
\begin{figure}\begin{center}
\begin{tikzpicture}[scale=2]

  \draw[thick, ->] (-1,0) -- (5.7,0);

  \filldraw[gray](2.5,0.2) circle (0.1);
    \draw (2.5,0) node[below] {$X(\ell)$};
  \filldraw (1.7,0.2) circle (0.1);
      \draw (1.7,0) node[below] {$x_{L}(\ell)$};
    \filldraw (-0.7,0.2) circle (0.1);
           \draw (-0.7,0) node[below] {$x_{L+1}(\ell)$};
\draw (3.3,0.2) circle (0.1);
     \draw (3.3,0) node[below] {$H_{R}(\ell)$};
   \draw (3.8,0) node[below] {$t^{\delta}$};
      \draw (1.1,0) node[below] {$-t^{\delta}$};
          \draw (4.5,0) node[below] {$t^{\chi^{\prime}}$};
            \draw (0.3,0) node[below] {$-t^{\chi^{\prime}}$};
    \draw (5.2,0.2) circle (0.1);
       \draw (5.2,0) node[below] {$H_{R+1}(\ell)$};
       \foreach \x in {5.2,2.5,1.7,-0.7,3.3,3.8,1.1,4.5,0.3}
       \draw[thick] (\x,0.075)--(\x,-0.075);
    
\end{tikzpicture}\end{center}
\caption{The particle configuration $\eta_{\ell}$ at time $\ell=t-t^{\chi}$  on the event $B_{L}\cap D_{R}\cap\{|X(\ell)|\leq t^{\delta}\}$:
The second class particle is gray, first class particles are black, holes are white.
 The hole $H_{R},$ particle $x_{L}$ and the second class particle lie in $\{-t^{\delta},\ldots, t^{\delta}\}$.
All particles and holes $x_{L+n},H_{R+n},n\geq1,$ lie outside $\{-t^{\chi^{\prime}},\ldots,t^{\chi^{\prime}}\}$. Since $\chi^{\prime}>\chi,$ the holes/particles  $x_{L+n},H_{R+n},n\geq1,$
will not interact with the second class particle during $[t-t^{\chi},t]$ as they are too far away from it.
Consequently, we may replace all particles $x_{L+n},n\geq1,$ by holes and all holes $H_{R+n},n\geq1,$ by particles. 
Since $\delta<\chi,$ the second class particle in the new configuration has enough time during $[t-t^{\chi},t]$  to be, at time $t$,  close  to its equilibrium position    $R-L+o(M^{\varepsilon}).$ 
  }
\label{Graph}
\end{figure}

We start by replacing $\{\mathcal{H}=R\}\cap \{\mathcal{P}=L\}$ by  $B_{L}\cap D_{R}$ in the following proposition.
\begin{prop}\label{redprop} We have
\begin{align*}
&\lim_{M\to \infty}\lim_{t\to\infty}\Pb\left( |X(t)-\mathcal{H}+\mathcal{P}|>M^{\varepsilon}\right)\\&=\lim_{M\to \infty}\lim_{t\to\infty}\sum_{L,R\in \{0,\ldots,2M\}}\Pb\left(\{|X(t)-R+L|\geq M^{\varepsilon}\}\cap B_{L}\cap D_{R}\right).
\end{align*}
\end{prop}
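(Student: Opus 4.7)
The plan is to execute three successive reductions. The first is a trivial identity: since the events $\{\mathcal{P}=L,\mathcal{H}=R\}_{L,R\in\Z}$ partition the sample space and $X(t)-\mathcal{H}+\mathcal{P}$ equals the constant $X(t)-R+L$ on each block,
\[
\Pb(|X(t)-\mathcal{H}+\mathcal{P}|>M^\varepsilon)=\sum_{L,R\in\Z}\Pb\bigl(\{|X(t)-R+L|>M^\varepsilon\}\cap\{\mathcal{P}=L\}\cap\{\mathcal{H}=R\}\bigr).
\]

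Second, I would truncate the sum to $L,R\in\{0,\ldots,2M\}$. Proposition~\ref{convp} gives $\lim_{t\to\infty}\Pb(\mathcal{P}<0)=\lim_{t\to\infty}\Pb(\mathcal{H}<0)=0$, and Corollary~\ref{corgue} shows that $(\mathcal{P}-M)/M^{1/3}$ and $(\mathcal{H}-M)/M^{1/3}$ converge, in the iterated limit $\lim_{M\to\infty}\lim_{t\to\infty}$, to the law of $-Y_{\GUE}$; in particular they are tight in $M$, so $\Pb(\mathcal{P}>2M)$ and $\Pb(\mathcal{H}>2M)$ vanish in that iterated limit. Hence restricting to $L,R\in\{0,\ldots,2M\}$ costs only an $o(1)$ error.

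Third is the nontrivial swap $\{\mathcal{P}=L,\mathcal{H}=R\}\leadsto B_L\cap D_R$. Using \eqref{incl}, a union bound over $L,R$ reduces this to showing that $\sum_{L=0}^{2M}\Pb(\{\mathcal{P}=L\}\setminus B_L)$ and $\sum_{R=0}^{2M}\Pb(\{\mathcal{H}=R\}\setminus D_R)$ both tend to $0$ in the iterated limit. On $\{\mathcal{P}=L\}\setminus B_L$ one has $x_L(t-t^\chi)\in(-t^{\chi'},-t^\delta]$, so I would split along the decoupling time $\tau_L=\inf\{\ell\geq 0:x_L^A(\ell)\neq x_L(\ell)\}$ from the proof of Proposition~\ref{convp}:
\[
\Pb(\{\mathcal{P}=L\}\setminus B_L)\leq\Pb\bigl(\tau_L\leq t-t^\chi,\,x_L(t-t^\chi)\leq-t^\delta\bigr)+\Pb\bigl(x_L^A(t-t^\chi)\in(-t^{\chi'},-t^\delta]\bigr).
\]

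The main obstacle is making each of these two pieces vanish for every fixed $L\in\{0,\ldots,2M\}$ as $t\to\infty$. The first piece is closed out by bound~\eqref{56}: with its free parameter taken equal to $\delta$, and once $t$ is large enough that $(2M+1)t^{\delta/2}<t^\delta$ (which holds because $t\to\infty$ before $M\to\infty$), the inclusion $\{x_L\leq-t^\delta\}\subseteq\{x_L\leq-(L+1)t^{\delta/2}\}$ holds for all $L\leq 2M$, so the first probability goes to $0$. For the second piece, Theorem~\ref{TWT} applied to the shifted step data \eqref{A} gives that both $\Pb(x_L^A(t-t^\chi)\geq-t^{\chi'})$ and $\Pb(x_L^A(t-t^\chi)\geq-t^\delta)$ have the common limit $F_{L,p}(C)$, because on the characteristic $t^{1/2}$ scale both thresholds are $o(t^{1/2})$ and $F_{L,p}$ is continuous; their difference therefore vanishes. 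Adding the $O(M)$ vanishing terms keeps the full sum small in the iterated limit, and the $\mathcal{H}$ side is handled identically via \eqref{B} and particle--hole duality, completing the three reductions.
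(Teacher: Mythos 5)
Your proposal is correct and follows essentially the same route as the paper: the same partition over $\{\mathcal{P}=L\}\cap\{\mathcal{H}=R\}$, the same truncation to $L,R\in\{0,\ldots,2M\}$ via Proposition \ref{convp} and Corollary \ref{corgue}, and the same replacement of $\{\mathcal{P}=L\}\cap\{\mathcal{H}=R\}$ by $B_L\cap D_R$ using the inclusions \eqref{incl} together with the estimate $\Pb(x_L^A(t-t^\chi)\in[-t^{\chi'},-t^\delta])\to 0$ from Theorem \ref{TWT} and the decoupling bound \eqref{56}. Your splitting along the decoupling time $\tau_L$ merely spells out what the paper compresses into ``it follows easily from \eqref{0+0} and \eqref{56}''.
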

\begin{proof}
We first note that 
\begin{align}
&\lim_{M\to \infty}\lim_{t\to\infty}\Pb\left( |X(t)-\mathcal{H}+\mathcal{P}|>M^{\varepsilon}\right)\nonumber\\&=\lim_{M\to \infty}\lim_{t\to\infty}\sum_{L,R\in \Z}\Pb\left(\{|X(t)-R+L|\geq M^{\varepsilon}\}\cap \{\mathcal{H}=R\}\cap \{\mathcal{P}=L\}\right)
\nonumber
\\&=\label{lastline}\lim_{M\to \infty}\lim_{t\to\infty}\sum_{L,R\in \{0,\ldots,2M\}}\Pb\left(\{|X(t)-R+L|\geq M^{\varepsilon}\}\cap \{\mathcal{H}=R\}\cap \{\mathcal{P}=L\}\right).
\end{align}
Here, for the identity \eqref{lastline} we used that by Proposition  \ref{convp}, 
\begin{align*}
\lim_{t\to \infty}\Pb(\mathcal{H}<0)=\lim_{t\to \infty}\Pb(\mathcal{P}<0)=0
\end{align*}
and that by Corollary \ref{corgue}
\begin{align*}
\lim_{M\to \infty}\lim_{t\to \infty}\Pb(\mathcal{H}>2M)=\lim_{M\to \infty}\lim_{t\to \infty}\Pb(\mathcal{P}>2M)=0.
\end{align*}

Next define the events
\begin{align*}
&B_{L}^{A}=\{x_{L}^{A}(t-t^{\chi})>-t^{\delta}\}\cap\{x_{L+1}^{A}(t-t^{\chi})\leq -t^{\chi^{\prime}}\}
\\& D_{R}^{B}=\{H_{R}^{B}(t-t^{\chi})<t^{\delta}\}\cap\{H_{R+1}^{B}(t-t^{\chi})\geq t^{\chi^{\prime}}\}.
\end{align*}
It follows easily from \eqref{0+0} and \eqref{56} that 
\begin{align*}
\lim_{t\to \infty} \Pb( \{\mathcal{P}=L\}\setminus B_{L})&=\lim_{t\to \infty}\Pb( \{\mathcal{P}^{A}=L\}\setminus B_{L}^{A})
\end{align*}
from which we deduce using Theorem \ref{TWT}
\begin{equation}
\lim_{t\to \infty} \Pb( \{\mathcal{P}=L\}\setminus B_{L})\leq \lim_{t\to \infty}\Pb(x_{L}^{A}(t-t^{\chi})\in[-t^{\chi^{\prime}},-t^{\delta}])=0.
\end{equation}
Likewise, we obtain 
\begin{align*}
\lim_{t\to \infty} \Pb( \{\mathcal{H}=R\}\setminus D_{R})=0.
\end{align*}
The inclusions \eqref{incl} thus imply that 
\begin{align*}
&\lim_{t\to\infty}\sum_{L,R\in \{0,\ldots,2M\}}\Pb\left(\{|X(t)-R+L|\geq M^{\varepsilon}\}\cap \{\mathcal{H}=R\}\cap \{\mathcal{P}=L\}\right)
\\&=\lim_{t\to\infty}\sum_{L,R\in \{0,\ldots,2M\}}\Pb\left(\{|X(t)-R+L|\geq M^{\varepsilon}\}\cap B_{L}\cap D_{R}\right).
\end{align*}
\end{proof}

In order to be able to replace $X(t)$ by the second class particle $\tilde{X}(t)$ defined in \eqref{X(t)2} below, we need to know that $X(t-t^{\chi})$ is not far away from the origin. 
\begin{prop}\label{deltaprp}
We have that for any $0<\delta< \chi <1/2$ 
\begin{equation*}
\lim_{t\to \infty}\Pb(|X(t-t^{\chi})|>t^{\delta})=0.
\end{equation*}
\end{prop}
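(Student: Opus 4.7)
\textbf{Proof plan for Proposition \ref{deltaprp}.} The goal is to show $\Pb(|X(t-t^\chi)|>t^\delta)\to 0$ as $t\to\infty$. The heuristic is that at time $t-t^\chi$ the hard upward shock at the origin is still microscopically intact: only the $O(1)$ first class particles from the left cluster that have arrived near the origin and the $O(1)$ holes from the right cluster, counted by $\mathcal{P}$ and $\mathcal{H}$ (which are bounded in probability by Proposition \ref{convp} and Corollary \ref{corgue}), perturb it. Since a second class particle pinned to such a hard up-shock has only $O(\log t)$ intrinsic fluctuations, analogous to the exponential-tail bound of Proposition \ref{blockp} for the leftmost particle of the blocking measure, $X(t-t^\chi)$ should be $O(\log t)\ll t^\delta$. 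By the $p\leftrightarrow q$ particle-hole duality, it suffices to prove $\Pb(X(t-t^\chi)>t^\delta)\to 0$.

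I would decompose the probability over the events $\{\mathcal{P}=L\}\cap\{\mathcal{H}=R\}$ for $L,R\in\{0,\ldots,N\}$, using Proposition \ref{convp} and Corollary \ref{corgue} to absorb the tails $\Pb(\mathcal{P}\vee\mathcal{H}>N)$ into $\varepsilon$ for $N$ large. On $\{\mathcal{P}=L\}\cap\{\mathcal{H}=R\}$, combining the inequalities $x_n\leq x_n^A$ and $H_n\geq H_n^B$ from Section \ref{compare} with Theorem \ref{TWT} forces, with high probability, $x_n(t-t^\chi)\leq-t^{\chi'}$ for $n>L$ and $H_n(t-t^\chi)\geq t^{\chi'}$ for $n>R$, so the outer particles and holes stay well outside $[-t^{\chi'},t^{\chi'}]$. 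Given this separation, one constructs an auxiliary ASEP $\tilde\eta$ whose initial data agrees with $\eta$ inside a large window of the origin but whose outer configuration is extended to a reversed-step profile, so that the corresponding first ASEP $\tilde\eta^1_0\in\Omega_{Z}$ with $Z=R-L$. Applying Proposition \ref{DOIT} and Proposition \ref{blockp} to $\tilde\eta$ then bounds the corresponding second class particle $\tilde X(t-t^\chi)$ within $O(\log t)$ of $R-L$ with high probability. Transferring this bound to $X(t-t^\chi)$ through the coupling, together with $|R-L|\leq 2N$ and $\log t+2N\ll t^\delta$, would finish the proof.

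The main obstacle is the last transfer: since $Ct^{1/2}>t^\chi$ for large $t$, particles from the left cluster do reach the origin by time $t-t^\chi$, so a naive finite-speed-of-propagation argument does not separate $\eta$ from $\tilde\eta$. The correct approach must identify that the $\mathcal{P}$ smallest-label particles (and the $\mathcal{H}$ smallest-label holes on the other side) are precisely the ones interacting with the shock; the remainder, confined outside $[-t^{\chi'},t^{\chi'}]$, can be replaced by the reversed-step extension without disturbing the behaviour in a neighborhood of the origin at time $t-t^\chi$. Executing this replacement rigorously and justifying that it is compatible with the basic coupling, so that the second class particles of $\eta$ and $\tilde\eta$ are close at time $t-t^\chi$, is the technical core of the argument.
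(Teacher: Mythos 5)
Your proposal correctly identifies the heuristic and even names the main obstacle, but it does not overcome it, and the route you sketch is essentially circular. The replacement of the outer configuration by a reversed-step profile, followed by an application of Proposition \ref{DOIT} and Lemma \ref{lem}, is exactly what the paper does in Propositions \ref{Xtild} and \ref{expprop} --- but only on the time window $[t-t^{\chi},t]$, and that replacement is only licit \emph{because} one already knows $|X(t-t^{\chi})|\leq t^{\delta}$, which is precisely the content of Proposition \ref{deltaprp}. To run your truncation at some earlier time $s\in[0,t-t^{\chi}]$ you would need an a priori localization of $X(s)$ at that time, i.e.\ the same statement you are trying to prove. A second problem is that $\mathcal{P}$ and $\mathcal{H}$ are defined by the particle/hole positions at the single time $t-t^{\chi}$; conditioning on $\{\mathcal{P}=L\}\cap\{\mathcal{H}=R\}$ does not tell you that only $L$ particles and $R$ holes ``interacted with the shock'' during $[0,t-t^{\chi}]$, so the identification you invoke in your last paragraph is itself an unproved (and nontrivial) claim.

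The paper's actual proof takes a different and non-circular route that your proposal is missing. Writing $X(s)$ as the discrepancy between the coupled ASEPs $\eta^{1}$ and $\eta^{2}$, one tracks the \emph{label} $\mathcal{X}^{P}(s)$ of the particle of $\eta^{1}$ sitting at $X(s)$. The key combinatorial observation is that $\mathcal{X}^{P}$ can only increase from $i$ to $i+1$ at an instant when $x_{i+1}^{1}$ and $x_{i}^{1}$ are adjacent; hence $\{\mathcal{X}^{P}(s)\geq R\}$ forces the ordered chain of adjacency times $\mathcal{T}_{0}<\cdots<\mathcal{T}_{R}\leq s$. One then splits according to whether $\mathcal{X}^{P}(t-t^{\chi})\leq 0$ (handled by Proposition \ref{blockp} applied to the reversed-step part), $1\leq\mathcal{X}^{P}(t-t^{\chi})\leq t^{\delta/4}$ (handled by the exponential bound (56) of \cite{N19} on $\Pb(\mathcal{E}^{R}_{t-t^{\chi}}\cap\{x_{R}^{1}(t-t^{\chi})\leq-(R+1)t^{\delta/2}\})$, summed over $R$), or $\mathcal{X}^{P}(t-t^{\chi})\geq t^{\delta/4}$ (where one uses that $\Pb(x_{L}^{1}(t-t^{\chi})\geq-t^{\delta})\to F_{L,p}(C)\to 0$ as $L\to\infty$, via Theorem \ref{TWT}). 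The bound $\Pb(X(t-t^{\chi})>t^{\delta})\to 0$ then follows by the symmetric argument with the hole label $\mathcal{X}^{H}$. This label-tracking device is the idea your plan lacks, and without it the ``technical core'' you defer remains a genuine gap.
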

\begin{proof}
We start by showing
\begin{equation}\label{jaja}
\lim_{t\to \infty}\Pb(X(t-t^{\chi})<-t^{\delta})=0.
\end{equation}

Define for all $s\geq 0$ a $\Z-$valued random  variable $\mathcal{X}^{P}(s)$ via 
\begin{equation*}
\mathcal{X}^{P}(s)=R \iff  x_{R}^{1}(s)=X(s), \, R\in \Z.
\end{equation*}
This means, $\mathcal{X}^{P}(s)$ is the label of the particle in $ \eta^{1} $  (defined in \eqref{IC1}) that is at position $X(s).$
We will show separately
\begin{align}\label{first}
&\lim_{t\to \infty}\Pb(\{\mathcal{X}^{P}(t-t^{\chi})\leq 0\}\cap \{X(t-t^{\chi})<-t^{\delta}\})=0
\\&\label{second} \lim_{t\to \infty}\Pb(\{\mathcal{X}^{P}(t-t^{\chi})\in\{1,\ldots, \lfloor t^{\delta/4}\rfloor\}\}\cap \{X(t-t^{\chi})<-t^{\delta}\})=0
\\& \label{third} \lim_{t\to \infty}\Pb(\{\mathcal{X}^{P}(t-t^{\chi})\geq t^{\delta/4}\}\cap \{X(t-t^{\chi})<-t^{\delta}\})=0.
\end{align}

For \eqref{first}, we have that 
\begin{align*}
&\lim_{t\to \infty}\Pb(\{\mathcal{X}^{P}(t-t^{\chi})\leq 0\}\cap \{X(t-t^{\chi})<-t^{\delta}\})
\\&\leq \lim_{t\to \infty}\Pb(x_{0}^{-\mathrm{step}}(t-t^{\chi})<-t^{\delta})=0.
\end{align*}
To see \eqref{second}, define $\mathcal{T}_{0}=0$ and for $i\in \Z_{\geq0}$
\begin{equation*}
\mathcal{T}_{i+1}=\inf \{\ell| \ell\geq \mathcal{T}_{i},x_{i+1}^{1}(\ell)=x_{i}^{1}(\ell)-1\}
\end{equation*}  
and define the event 
\begin{equation*}
\mathcal{E}_{s}^{R}= \{\mathcal{T}_{0}<\mathcal{T}_{1}<\cdots < \mathcal{T}_{R}\leq s\}.
\end{equation*}  
  Now  if  $\mathcal{X}^{P}(\cdot)$ makes a jump from $i$ to $i+1$ at time $\tilde{\mathcal{T}}_{i},$  (i.e. 
$\mathcal{X}^{P}(\tilde{\mathcal{T}}_{i}^{-})=i, \mathcal{X}^{P}(\tilde{\mathcal{T}}_{i})=i+1$), then necessarily $x_{i+1}^{1}(\tilde{\mathcal{T}}_{i})=x_{i}^{1}(\tilde{\mathcal{T}}_{i})-1$.
From this and $\mathcal{X}^{P}(0)=0$  we derive for $R\in \Z_{\geq 1}$ that 
\begin{equation*}
\{\mathcal{X}^{P}(s)=R\}\subseteq \mathcal{E}_{s}^{R}.
\end{equation*}
With a proof that is a trivial adaption  of the proof of (56) in \cite{N19}, we obtain 
\begin{equation}\label{562}
\Pb( \mathcal{E}_{t-t^{\chi}}^{R}\cap\{x_{R}^{1}(t-t^{\chi})\leq -(R+1)t^{\delta/2}\} )  \leq (R+1)C_{1}e^{-C_{2}t^{\delta/2}}.
\end{equation}
Consequently, we may bound
\begin{align*}
& \sum_{R \in \{1,\ldots, \lfloor t^{\delta/4}\rfloor\}} \Pb(\{\mathcal{X}^{P}(t-t^{\chi})=R\}\cap \{X(t-t^{\chi})<-t^{\delta}\})
 \\& \leq \sum_{R \in \{1,\ldots, \lfloor t^{\delta/4}\rfloor\}} \Pb( \mathcal{E}_{t-t^{\chi}}^{R}\cap\{x_{R}^{1}(t-t^{\chi})\leq -(R+1)t^{\delta/2}\} )
 \\&\leq C_{1}e^{-C_{2}t^{\delta/2}}( t^{\delta/4}+1)t^{\delta/4}/2 \to_{t\to\infty}0,
\end{align*}
proving \eqref{second}.

 Finally, to prove \eqref{third}, we note for $R\geq 1$ 
 \begin{equation*}
\{\mathcal{X}^{P}(s)\geq R\}\subseteq \{\mathcal{X}^{P}(\ell )= R,\mathrm{\, for\, an\,}\ell\in[0,s]\}\subseteq\mathcal{E}_{s}^{R}.
\end{equation*}
Thus we have
\begin{align*}
&\Pb(\{\mathcal{X}^{P}(t-t^{\chi})\geq t^{\delta/4}\}\cap \{X(t-t^{\chi})<-t^{\delta}\})
\\&\leq \Pb (\mathcal{E}_{t-t^{\chi}}^{t^{\delta/4}}  )\leq  \Pb (\mathcal{E}_{t-t^{\chi}}^{t^{\delta/4}}  \cap \{x_{t^{\delta/4}}^{1}(t-t^{\chi})<-t^{\delta}\} )+\Pb(x_{t^{\delta/4}}^{1}(t-t^{\chi})\geq -t^{\delta}).
\end{align*}

Now it follows from \eqref{562}
that \begin{equation*}
\lim_{t\to\infty}\Pb (\mathcal{E}_{t-t^{\chi}}^{t^{\delta/4}}  \cap \{x_{t^{\delta/4}}^{1}(t-t^{\chi})<-t^{\delta}\} )=0.
\end{equation*}
Furthermore, we have for any $L\geq 1$ fixed that 
\begin{equation*}
\lim_{t\to\infty}\Pb(x_{t^{\delta/4}}^{1}(t-t^{\chi})\geq -t^{\delta})\leq \lim_{t\to\infty} \Pb(x_{L}^{1}(t-t^{\chi})\geq -t^{\delta})=F_{L,p}(C).
\end{equation*}
Since $\lim_{L\to\infty}F_{L,p}(C)=0,$  we have thus proven \eqref{third} and hence \eqref{jaja}.

Finally, to show

\begin{equation*}
\lim_{t\to \infty}\Pb(X(t-t^{\chi})>t^{\delta})=0,
\end{equation*}
we proceed in a very similar way: We define  for all $s\geq 0$ a $\Z-$valued random  variable $\mathcal{X}^{H}(s)$ via 
\begin{equation*}
\mathcal{X}^{H}(s)=R \iff  H_{R}^{2}(s)=X(s), \, R\in \Z
\end{equation*}
and consider three different cases  as in \eqref{first}, \eqref{second},\eqref{third}.

\end{proof}

For brevity, let us write in the following
\begin{equation}\label{FLR}
\mathcal{F}_{L,R}^{\delta}=B_{L}\cap D_{R} \cap \{   |X(t-t^{\chi})|\leq t^{\delta}\}.
\end{equation}

Furthermore, define 
\begin{equation*}
\tilde{\eta}_{t-t^{\chi}}(j)=\mathbf{1}_{\{|j|\leq t^{\delta}\}}\eta_{t-t^{\chi}}(j)+\mathbf{1}_{\{j>t^{\delta}\}}.
\end{equation*}
Let $(\tilde{\eta}_{\ell}),\ell\geq t-t^{\chi})$ be the ASEP starting  at time $t-t^{\chi}$
 from $\tilde{\eta}_{t-t^{\chi}}$ and  coupled with $(\eta_{\ell},\ell\geq 0)$ 
 via the basic coupling.
On the very likely event $ \{   |X(t-t^{\chi})|\leq t^{\delta}\}$, $\tilde{\eta}_{t-t^{\chi}}$ has a second class particle at position $X(t-t^{\chi})$. 
We will denote by $\tilde{X}(\ell)$ the position of the second class particle of 
$\tilde{\eta}_{\ell}$  for $\ell \in [t-t^{\chi},t]$. If we denote  $G_{2}=\{j\in \Z|\tilde{\eta}_{t-t^\chi}(j)=1\},$  let us write 
\begin{equation*}
\tilde{\eta}^{1}_{t-t^{\chi}}(j)=\min\{\tilde{\eta}_{t-t^\chi}(j),1\} \quad \tilde{\eta}^{2}_{t-t^{\chi}}(j)=\mathbf{1}_{G_{2}}\tilde{\eta}_{t-t^\chi}(j)
\end{equation*} 
(recall that the occupation variable for the second class particle is $2$).
This simply means that in $\tilde{\eta}^{1}_{t-t^{\chi}},$ the second class particle is replaced by a first class particle, whereas in 
$\tilde{\eta}^{2}_{t-t^{\chi}},$ the second class particle is replaced by a hole.
If  $ \{   |X(t-t^{\chi})|\leq t^{\delta}\}$ holds, $\tilde{\eta}^{1}_{t-t^{\chi}}(j)=\tilde{\eta}^{2}_{t-t^{\chi}}(j)=\tilde{\eta}_{t-t^{\chi}}(j)$  for all $j$  except
$j=X(t-t^{\chi})$.
 We can then define using the basic coupling (see Section \ref{lbc})
\begin{equation}\label{X(t)2}
\tilde{X}(t):=\sum_{j\in \Z}j\mathbf{1}_{\{\tilde{\eta}^{1}_{t}(j)\neq \tilde{\eta}^{2}_{t}(j)\}}.
\end{equation}

\begin{prop}\label{Xtild}
We have 
\begin{equation}
\lim_{t\to \infty}\Pb(\mathcal{F}_{L,R}^{\delta} \cap\{X(t)\neq \tilde{X}(t)\})=0.
\end{equation}
\end{prop}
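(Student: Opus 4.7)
The plan is to exploit the fact that on $\mathcal{F}_{L,R}^\delta$ the configurations $\eta_{t-t^\chi}$ and $\tilde{\eta}_{t-t^\chi}$ already agree on the large window $(-t^{\chi'}, t^{\chi'})$, and then argue that during the remaining horizon $t^\chi$ the far-away discrepancies cannot propagate inward fast enough to disturb the second class particle. First I would observe that on $B_L$ there are no particles of $\eta_{t-t^\chi}$ in $(-t^{\chi'}, -t^\delta]$ and on $D_R$ no holes in $[t^\delta, t^{\chi'})$; combined with $\tilde{\eta}_{t-t^\chi} = \eta_{t-t^\chi}$ on $|j| \leq t^\delta$, this forces the two configurations to coincide throughout $(-t^{\chi'}, t^{\chi'})$. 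The only mismatches are therefore the particles $x_{L+k}(t-t^\chi) \leq -t^{\chi'}$ (present in $\eta^i$, absent in $\tilde{\eta}^i$) and the holes $H_{R+k}(t-t^\chi) \geq t^{\chi'}$ (holes in $\eta^i$, particles in $\tilde{\eta}^i$).

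Next I would run the basic coupling of the four processes $\eta^1, \eta^2, \tilde{\eta}^1, \tilde{\eta}^2$ on $[t-t^\chi, t]$; since $\tilde{X}(t-t^\chi) = X(t-t^\chi)$ on $\mathcal{F}_{L,R}^\delta$, the identity $X(t) = \tilde{X}(t)$ will follow provided no ``extra'' discrepancy between $\eta^i$ and $\tilde{\eta}^i$ comes within one site of the common second class particle during the interval. To control these extra discrepancies I would interpolate through an intermediate configuration $\eta^{i,R}$, obtained from $\eta^i_{t-t^\chi}$ by filling the holes $H_{R+k}(t-t^\chi)$ with particles: this gives the pointwise orderings $\eta^i_{t-t^\chi} \leq \eta^{i,R}_{t-t^\chi}$ and $\eta^{i,R}_{t-t^\chi} \geq \tilde{\eta}^i_{t-t^\chi}$, both of which persist under basic coupling by the attractiveness of ASEP.

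The crux, and the main anticipated obstacle, is controlling the leftmost $(+1)$-discrepancy $\alpha^i_s$ between $\eta^i_s$ and $\eta^{i,R}_s$. Because $\eta^i \leq \eta^{i,R}$ throughout, no $(-1)$-discrepancy is ever created, so $\alpha^i_s$ is a single well-defined trajectory that cannot annihilate. A direct case analysis of the clock events on the bond immediately to the left of $\alpha^i_s$ that actually trigger a leftward step (requiring either both processes to have a particle there, contributing rate $p$, or both to have a hole, contributing rate $q$) shows that the leftward jump rate is bounded by $\max(p,q) \leq 1$. Hence the number of such jumps during $[t-t^\chi, t]$ is stochastically dominated by a $\mathrm{Poisson}(t^\chi)$ random variable, giving $\alpha^i_s \geq H_{R+1}(t-t^\chi) - 2t^\chi \geq t^{\chi'} - 2t^\chi$ with probability $1 - O(e^{-c t^\chi})$. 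The symmetric argument, applied to the rightmost $(-1)$-discrepancy between $\eta^{i,R}_s$ and $\tilde{\eta}^i_s$, yields a corresponding upper bound of $-t^{\chi'} + 2 t^\chi$.

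Combining these single-trajectory bounds with Proposition \ref{deltaprp} and a crude Poisson estimate on the displacement of $X$ itself during $[t-t^\chi, t]$, with probability tending to $1$ on $\mathcal{F}_{L,R}^\delta$ the second class particle stays in $[-t^\delta - 2t^\chi, t^\delta + 2t^\chi]$ while every extra discrepancy is confined to $|j| \geq t^{\chi'} - 2t^\chi$. Since $\chi' > \chi > \delta$, these two regions are separated by a macroscopic gap; the local neighborhoods of the second class particle in $\eta$ and in $\tilde{\eta}$ therefore agree throughout the interval, so $X$ and $\tilde{X}$ evolve identically under the basic coupling and $X(t) = \tilde{X}(t)$, completing the proof.
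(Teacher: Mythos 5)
Your argument is correct, and it reaches the conclusion by a genuinely different mechanism than the paper, even though both rest on the same separation of scales: on $\mathcal{F}_{L,R}^{\delta}$ the configurations $\eta_{t-t^{\chi}}$ and $\tilde{\eta}_{t-t^{\chi}}$ agree on $(-t^{\chi^{\prime}},t^{\chi^{\prime}})$, while the remaining time horizon $t^{\chi}$ is far too short to bridge the gap between $t^{\delta}$ and $t^{\chi^{\prime}}$. The paper exploits this by freezing the two boundary sites $\pm t^{\chi^{\prime}}/2$: it shows that with probability tending to one no jump involves these sites during $[t-t^{\chi},t]$ (such a jump would force $x_{L}$, $x_{L+1}$, the second class particle, or the analogous holes to travel a distance of order $t^{\chi^{\prime}}$ in time $t^{\chi}$), so the dynamics inside the box $\{|j|\leq t^{\chi^{\prime}}/2\}$ decouple from the outside and are identical for $\eta$ and $\tilde{\eta}$. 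You instead interpolate through the filled configuration $\eta^{i,R}$ and use attractiveness to reduce everything to the speed of the extremal discrepancy in an ordered pair of coupled ASEPs, bounded by $\max(p,q)$ per unit time via the two mutually exclusive clock events at the adjacent bond. Your route requires the additional (correct, but worth stating) facts that in an ordered pair the discrepancies are conserved and cannot exchange order under the basic coupling, so that the leftmost (resp.\ rightmost) discrepancy is a single well-defined trajectory; in return it yields an explicit quantitative confinement of the extra discrepancies to $|j|\geq t^{\chi^{\prime}}-2t^{\chi}$ with exponentially small failure probability. Two small points to make explicit when writing this up: first, the ordering $\eta^{i,R}_{t-t^{\chi}}\geq\tilde{\eta}^{i}_{t-t^{\chi}}$ on $\{j>t^{\delta}\}$ uses that on $D_{R}$ all holes of index at most $R$ lie to the left of $t^{\delta}$, so every hole of $\eta^{i}$ beyond $t^{\delta}$ is indeed one of the filled $H_{R+k}$; second, to pass from agreement of $\eta^{i}_{t}$ and $\tilde{\eta}^{i}_{t}$ on the middle region to $X(t)=\tilde{X}(t)$, note that $X(t)$ lies in that region and is therefore a discrepancy of the pair $(\tilde{\eta}^{1}_{t},\tilde{\eta}^{2}_{t})$, hence equals its unique discrepancy $\tilde{X}(t)$.
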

\begin{proof}
Note that 
\begin{equation}\label{guilt}
\mathcal{F}_{L,R}^{\delta} \subseteq \{ \tilde{\eta}_{t-t^{\chi}}(j)=\eta_{t-t^{\chi}}(j), \, \mathrm{ for}  \, |j|\leq t^{\chi^{\prime}}/2\},
\end{equation}
in fact we could replace $ t^{\chi^{\prime}}/2$ by $ t^{\chi^{\prime}}$ in \eqref{guilt}. 

We will show that on $\mathcal{F}_{L,R}^{\delta}$, it is very unlikely  
that $(\tilde{\eta}_{\ell}, \ell \in [t-t^{\chi},t]),$ or $(\eta_{\ell}, \ell \in [t-t^{\chi},t])$ have a jump that involves the sites $\pm  t^{\chi^{\prime}}/2$. 
This is so because for a jump to happen at  $\pm  t^{\chi^{\prime}}/2$, a particle or a hole would have travel a distance of at least 
 $\pm  t^{\chi^{\prime}}/2-t^{\delta}$ during $[t-t^{\chi},t]$ which is very unlikely because $ t^{\chi^{\prime}}/2 \gg t^{\chi}$ and particles/holes  have bounded speed.
 See also Figure \ref{Graph}.

Consequently, $X(t),\tilde{X}(t)$ do not depend on what happens outside $\{j:|j|\leq t^{\chi^{\prime}}/2\}$ during $[t-t^{\chi},t]$. But since $X(t-t^{\chi})=\tilde{X}(t-t^{\chi})$ and 
$\tilde{\eta}_{t-t^{\chi}},\eta_{t-t^{\chi}}$ agree on $\{j:|j|\leq t^{\chi^{\prime}}/2\}$ by \eqref{guilt}, this implies $X(t)=\tilde{X}(t)$.

We now make this argument precise. We can formalize the event that $\eta,\tilde{\eta}$ have no jump involving the sites $\pm  t^{\chi^{\prime}}/2$ as 
\begin{align*}
&\tilde{\mathcal{E}}_{t}=\{\mathrm{\,for\, all \, }\ell \in [t-t^{\chi},t] \mathrm{\,and\,}i\in \{1,2\}, \tilde{\eta}_{\ell}((-1)^{i} t^{\chi^{\prime}}/2)= \tilde{\eta}_{t-t^{\chi}}((-1)^{i} t^{\chi^{\prime}}/2)\}
\\&\mathcal{E}_{t}=\{\mathrm{\,for\, all \, }\ell \in [t-t^{\chi},t] \mathrm{\,and\,}i\in \{1,2\}, \eta_{\ell}((-1)^{i} t^{\chi^{\prime}}/2)= \eta_{t-t^{\chi}}((-1)^{i} t^{\chi^{\prime}}/2)\}.
\end{align*}

We wish to show 
\begin{equation}\label{007}
\lim_{t\to\infty}\Pb(\mathcal{F}_{L,R}^{\delta}\cap \mathcal{E}_{t}^{c})+\Pb(\mathcal{F}_{L,R}^{\delta}\cap \tilde{\mathcal{E}}_{t}^{c})=0.
\end{equation}
Let us show 
\begin{equation}\label{1}
\lim_{t\to\infty}\Pb(\mathcal{F}_{L,R}^{\delta}\cap\{ \mathrm{\,for\, a\, }\ell \in [t-t^{\chi},t],  \eta_{\ell}(- t^{\chi^{\prime}}/2)\neq \eta_{t-t^{\chi}}(- t^{\chi^{\prime}}/2)\})=0.
\end{equation}
To see this, note
\begin{equation*}
\mathcal{F}_{L,R}^{\delta}\subseteq\{x_{L+1}(t-t^{\chi})\leq-t^{\chi^{\prime}}\}\cap\{x_{L}(t-t^{\chi})\geq-t^{\delta}\}\cap \{X(t-t^{\chi})\geq-t^{\delta}\}.
\end{equation*}
So for  the event
\begin{equation*}
\mathcal{F}_{L,R}^{\delta}\cap\{ \mathrm{\,for\, a\, }\ell \in [t-t^{\chi},t],  \eta_{\ell}(- t^{\chi^{\prime}}/2)\neq \eta_{t-t^{\chi}}(- t^{\chi^{\prime}}/2)\}
\end{equation*}
to hold, $x_{L+1}$ or $x_{L}$ or the second class particles would have to make $ t^{\chi^{\prime}}/2-t^{\delta}$ jumps during $[t-t^{\chi},t]$. 
The number of jumps made by  $x_{L+1}, x_{L}$ and the second class particle  can be (crudely) bounded by a rate $2$ Poisson process. 
Since $\chi^{\prime}>\chi, $ we thus  see that \eqref{1} holds. To finish the proof of \eqref{007},  we have to show \eqref{1} with $\eta$ replaced by $\tilde{\eta}$ and/or $- t^{\chi^{\prime}}/2$
replaced by $ t^{\chi^{\prime}}/2$. The required argument for this being identical, we omit this.

Finally, using \eqref{guilt}, we obtain 
\begin{equation*}
\mathcal{F}_{L,R}^{\delta} \cap \tilde{\mathcal{E}}_{t}\cap\mathcal{E}_{t}\subseteq \{ \tilde{\eta}_{t}(j)=\eta_{t}(j), \, \mathrm{ for}  \, |j|\leq t^{\chi^{\prime}}/2\},
\end{equation*}
in particular, $X(t)= \tilde{X}(t)$ if $  \mathcal{F}_{L,R}^{\delta} \cap \tilde{\mathcal{E}}_{t}\cap\mathcal{E}_{t}$ holds. Hence, using \eqref{007}, we may conclude
\begin{align*}
&\lim_{t\to \infty}\Pb(\mathcal{F}_{L,R}^{\delta} \cap\{X(t)\neq \tilde{X}(t)\})
\\&=\lim_{t\to \infty}\Pb(\mathcal{F}_{L,R}^{\delta} \cap \tilde{\mathcal{E}}_{t}\cap\mathcal{E}_{t} \cap\{X(t)\neq \tilde{X}(t)\})=0.
\end{align*}

\end{proof}

Finally, we will squeeze  $\tilde{X}(t)$ in between the leftmost particle and the right most hole of countable state space ASEPs, which we can control using Proposition \ref{DOIT}.

\begin{prop} \label{expprop}There are constants $C_1, C_2 >0 $ such that  we have 
\begin{align*}
\lim_{t\to\infty}\sum_{L,R\in \{0,\ldots,2M\}} \Pb\left(\{|X(t)-R+L|\geq M^{\varepsilon}\}\cap B_{L}\cap D_{R}\right)\leq C_{1} e^{-C_{2} M^{\varepsilon}}.
\end{align*}
\end{prop}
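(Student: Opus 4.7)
The plan is to reduce to $\tilde X(t)$ using Proposition \ref{Xtild} (combined with Proposition \ref{deltaprp} to absorb the rare event $\{|X(t-t^\chi)|>t^\delta\}$ into $\mathcal{F}_{L,R}^\delta:=B_L\cap D_R\cap\{|X(t-t^\chi)|\le t^\delta\}$), after which it suffices to bound
\[
\sum_{L,R\in\{0,\ldots,2M\}}\Pb\bigl(\{|\tilde X(t)-(R-L)|\ge M^\varepsilon\}\cap\mathcal{F}_{L,R}^\delta\bigr).
\]
By construction, $\tilde\eta^1_{t-t^\chi}$ and $\tilde\eta^2_{t-t^\chi}$ differ from reversed step data only inside the finite window $[-t^\delta,t^\delta]$ and hence lie in some $\Omega_{Z_1}$ and $\Omega_{Z_1+1}$; thus $\tilde X(t)$ is a second class particle in a countable state space ASEP of the type treated in Section \ref{countable}, run for time $t^\chi\gg t^\delta$.

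A particle-conservation/current count in the window yields $Z_1=1+t^\delta-P$, where $P$ is the number of particles of $\eta^1_{t-t^\chi}$ in $[-t^\delta,t^\delta]$. On $\mathcal{F}_{L,R}^\delta$ intersected with the high-probability event that (i) no original particle or hole of $\eta^1$ has drifted beyond the window opposite to its drift during $[0,t-t^\chi]$, and (ii) each of $x_1,\ldots,x_L$ and $H_1,\ldots,H_R$ contributes a single net crossing of the relevant window boundary, one obtains $P=t^\delta+1+L-R$, hence $Z_1=R-L$ and $\tilde\eta^2_{t-t^\chi}\in\Omega_{R-L+1}$. Introduce the block reference configurations
\[
wc_1=\mathbf{1}_{\{-t^\delta,\ldots,L-R\}}+\mathbf{1}_{\Z_{\ge t^\delta+1}}\in\Omega_{R-L},\qquad wc_2=\mathbf{1}_{\{-t^\delta,\ldots,L-R-1\}}+\mathbf{1}_{\Z_{\ge t^\delta+1}}\in\Omega_{R-L+1},
\]
which are of the form \eqref{abN} with $\mathcal{M}=O(t^\delta)$; since they pack all excess particles at the leftmost positions, $wc_1\preceq\tilde\eta^1_{t-t^\chi}$ and $wc_2\preceq\tilde\eta^2_{t-t^\chi}$ in the order \eqref{order}. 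Under basic coupling during $[t-t^\chi,t]$, Lemma \ref{lem} then gives
\[
x_0^{wc_1}(t)\le x_0^{\tilde\eta^1}(t)\le \tilde X(t)\le H_0^{\tilde\eta^2}(t)\le H_0^{wc_2}(t)
\]
on $\mathcal{F}_{L,R}^\delta$, where the outer inequalities come from Lemma \ref{lem} and the inner ones from the fact that $\tilde X(t)$ is simultaneously a particle of $\tilde\eta^1_t$ and a hole of $\tilde\eta^2_t$.

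Since $\chi>\delta$ forces $t^\chi>K\mathcal{M}$ for large $t$, Proposition \ref{DOIT} (with its free $\varepsilon$-parameter fixed at $1$, and its tail displacement set equal to $M^\varepsilon$) yields
\[
\Pb\bigl(\{|\tilde X(t)-(R-L)|\ge M^\varepsilon+1\}\cap\mathcal{F}_{L,R}^\delta\bigr)\le \frac{2}{\mathcal{M}}+2 C_1 e^{-C_2 M^\varepsilon}.
\]
Summing over the $(2M+1)^2$ pairs $(L,R)$ and taking $t\to\infty$ first kills the $1/\mathcal{M}$ term, while $(2M+1)^2 C_1 e^{-C_2 M^\varepsilon}\le C_1' e^{-C_2' M^\varepsilon}$ for redefined constants, giving the claim. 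The hard part will be the conservation step pinning down $Z_1=R-L$: it hinges on random-walk style tail bounds (analogous to (56) in \cite{N19}) ruling out atypical $\Omega(t^\delta)$ displacements of the original first-class particles/holes, and ruling out multiple offsetting boundary crossings by the arrivals $x_1,\ldots,x_L$ and $H_1,\ldots,H_R$.
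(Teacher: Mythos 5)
Your proposal follows the paper's proof essentially step for step: reduce to $\tilde X(t)$ via Propositions \ref{deltaprp} and \ref{Xtild}, identify the $\Omega_Z$ in which $\tilde\eta^1_{t-t^\chi},\tilde\eta^2_{t-t^\chi}$ live, dominate them in the order \eqref{order} by packed block configurations (your $wc_1,wc_2$ are exactly the paper's $\bar\eta^1,\bar\eta^2$), sandwich $\tilde X(t)$ between the leftmost particle and rightmost hole via Lemma \ref{lem}, and apply Proposition \ref{DOIT}. Two remarks. First, the step you defer as ``the hard part'' --- pinning down $Z_1=R-L$ --- does not require any random-walk tail bounds or extra high-probability events: on $B_L\cap D_R\cap\{|X(t-t^\chi)|\le t^\delta\}$ the count is forced. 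Since same-type labels preserve their order, $x_L(t-t^\chi)>-t^\delta$ places \emph{every} particle with label $n\le L$ (in particular the entire right branch $n\le 0$) strictly to the right of $-t^\delta$, while $x_{L+1}(t-t^\chi)\le -t^{\chi'}$ places every particle with label $n\ge L+1$ to the left of it; hence the net particle current across the left edge of the window up to time $t-t^\chi$ is exactly $L$, and symmetrically the net hole current across the right edge is exactly $R$. With $t^\delta+1$ particles initially in the window this gives $P=t^\delta+1+L-R$ outright, which is precisely what the paper compresses into ``counting yields.'' Your events (i) and (ii) are therefore superfluous (they are implied by $B_L\cap D_R$), though including them would not break the argument since their failure probabilities vanish as $t\to\infty$. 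Second, your final summation discards $\mathcal{F}_{L,R}^\delta$ and pays a factor $(2M+1)^2$, which you then absorb into the exponential; the paper instead exploits that the block ASEPs, being started from deterministic data at time $t-t^\chi$ and run on the fresh Poisson clocks, are independent of $\mathcal{F}_{L,R}^\delta$, so the sum over $L,R$ is controlled by $\sum_{L,R}\Pb(\{\mathcal{P}=L\}\cap\{\mathcal{H}=R\})\le 1$. Both routes close the proof.
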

\begin{proof}
We first note that using Propositions \ref{deltaprp} and \ref{Xtild} that
\begin{align*}
&\lim_{t\to\infty}\sum_{L,R\in \{0,\ldots,2M\}} \Pb\left(\{|X(t)-R+L|\geq M^{\varepsilon}\}\cap B_{L}\cap D_{R}\right)
\\&=\lim_{t\to\infty}\sum_{L,R\in \{0,\ldots,2M\}} \Pb\left(\{|\tilde{X}(t)-R+L|\geq M^{\varepsilon}\} \cap \mathcal{F}_{L,R}^{\delta}   \right).
\end{align*}
The main goal of the proof are the bounds \eqref{bounds3}, which   squeeze $\tilde{X}(t)$ in between a particle and a hole of countable state space ASEPs.

Counting yields
\begin{equation}
\mathcal{F}_{L,R}^{\delta} \subseteq\{\tilde{\eta}^{1}_{t-t^{\chi}} \in \Omega_{R-L-1}, \tilde{\eta}^{2}_{t-t^{\chi}}\in \Omega_{R-L}\}.
\end{equation}

Furthermore, by construction the configurations $\tilde{\eta}^{1}_{t-t^{\chi}},\tilde{\eta}^{2}_{t-t^{\chi}} $ have their leftmost particle  to the right of $-t^{\delta}$, and their rightmost hole to the left of $t^{\delta}$.
Consequently, defining
\begin{align*}
&\bar{\eta}^{1}=\mathbf{1}_{\{-t^{\delta},\ldots,L-R\}}+\mathbf{1}_{\{j>t^{\delta}\}}
&\bar{\eta}^{2}=\mathbf{1}_{\{-t^{\delta},\ldots,L-R-1\}}+\mathbf{1}_{\{j>t^{\delta}\}}
\end{align*}
we have  - recalling the partial order \eqref{order} -
\begin{equation}\label{bound}
\mathcal{F}_{L,R}^{\delta} \subseteq\{\bar{\eta}^{1}\preceq \tilde{\eta}^{1}_{t-t^{\chi}}, \bar{\eta}^{2}\preceq \tilde{\eta}^{2}_{t-t^{\chi}}\}.
\end{equation}
Start now at time $t-t^{\chi}$ two ASEPs from $\bar{\eta}^{1}_{t-t^{\chi}}:=\bar{\eta}^{1},\bar{\eta}^{2}_{t-t^{\chi}}:=\bar{\eta}^{2}$.
Let $\bar{H}_{0}^{2}(t)$ resp. $\tilde{H}_{0}^{2}(t)$ be the positions of the rightmost hole of  $\bar{\eta}^{2}_{t}$ resp. $\tilde{\eta}^{2}_{t}$ and 
let $\bar{x}_{0}^{1}(t)$ resp. $\tilde{x}_{0}^{1}(t)$ be the positions of the leftmost particle of  $\bar{\eta}^{1}_{t}$ resp. $\tilde{\eta}^{1}_{t}.$

Now the site $\tilde{X}(t)$ is always occupied by a particle from $\tilde{\eta}^{1}_{t}$ and a hole from $\tilde{\eta}^{2}_{t}$:
 $\tilde{\eta}^{1}_{t}(\tilde{X}(t))=1,\tilde{\eta}^{2}_{t}(\tilde{X}(t))=0.$ Because of this, we get the bounds 
\begin{equation}\label{bound2}
\mathcal{F}_{L,R}^{\delta} \subseteq\{\tilde{x}_{0}^{1}(t)\leq \tilde{X}(t)\leq \tilde{H}_{0}^{2}(t)\}.
\end{equation}

But now we can apply Lemma \ref{lem} to \eqref{bound}, which together with \eqref{bound2} yields

\begin{equation}\label{bounds3}
\mathcal{F}_{L,R}^{\delta} \subseteq\{\bar{x}_{0}^{1}(t)\leq \tilde{X}(t)\leq \bar{H}_{0}^{2}(t)\}.
\end{equation}
To control $\bar{x}_{0}^{1}(t),\bar{H}_{0}^{2}(t),$ we can now use Proposition \ref{DOIT}: We set $a=-t^{\delta},b=L-R$ (resp. $b=L-R-1$), $N=t^{\delta}$ and  $\varepsilon=1$
and note $t^{\chi}>K\mathcal{M}$ for any constant $K$  and $t$ large enough (recall $\mathcal{M}=\max\{N-b,b-a+1\}$). Then, Proposition \ref{DOIT} yields 
\begin{align*}
&\lim_{t\to\infty}\Pb(\bar{H}_{0}^{2}(t)-R+L>M^{\varepsilon})\leq C_{1}e^{-C_{2}M^{\varepsilon}}
\\&\lim_{t\to\infty}\Pb(\bar{x}_{0}^{1}(t)-R+L<-M^{\varepsilon})\leq C_{1}e^{-C_{2}M^{\varepsilon}}.
\end{align*}
Furthermore, note that by construction, $\bar{x}_{0}^{1}(t),\bar{H}_{0}^{2}(t)$ are independent of $\mathcal{F}_{L,R}^{\delta}$.
We conclude that
\begin{align*}
&\lim_{t\to\infty}\sum_{L,R\in \{0,\ldots,2M\}} \Pb\left(\{\tilde{X}(t)-R+L\geq M^{\varepsilon}\} \cap \mathcal{F}_{L,R}^{\delta}   \right)
\\&\leq\lim_{t\to\infty}\sum_{L,R\in \{0,\ldots,2M\}} \Pb\left(\{\bar{H}_{0}^{2}(t)-R+L\geq M^{\varepsilon}\} \cap \mathcal{F}_{L,R}^{\delta}   \right)
\\&=\lim_{t\to\infty}\sum_{L,R\in \{0,\ldots,2M\}} \Pb\left(\{\bar{H}_{0}^{2}(t)-R+L\geq M^{\varepsilon}\}\right)\Pb(  \mathcal{F}_{L,R}^{\delta}   )
\\&\leq\lim_{t\to\infty}\sum_{L,R\in \{0,\ldots,2M\}}  C_{1}e^{-C_{2}M^{\varepsilon}}\Pb(  \{\mathcal{P}=L\}\cap\{\mathcal{H}=R\}   )
\\&\leq  C_{1}e^{-C_{2}M^{\varepsilon}}.
\end{align*}
Likewise, we have that
\begin{align*}
&\lim_{t\to\infty}\sum_{L,R\in \{0,\ldots,2M\}} \Pb\left(\{\tilde{X}(t)-R+L\leq- M^{\varepsilon}\} \cap \mathcal{F}_{L,R}^{\delta}   \right)
\\&\leq\lim_{t\to\infty}\sum_{L,R\in \{0,\ldots,2M\}} \Pb\left(\{\bar{x}_{0}^{1}(t)-R+L\leq- M^{\varepsilon}\} \cap \mathcal{F}_{L,R}^{\delta}   \right)
\\&\leq  C_{1}e^{-C_{2}M^{\varepsilon}}.
\end{align*}
This finishes the proof.
\end{proof}

\begin{proof}[Proof of Theorem \ref{couplthm}]
Using Propositions \ref{redprop} and \ref{expprop} we obtain
\begin{align*}
&\lim_{M\to \infty}\lim_{t\to\infty}\Pb\left( |X(t)-\mathcal{H}+\mathcal{P}|>M^{\varepsilon}\right)\\&=\lim_{M\to \infty}\lim_{t\to\infty}\sum_{L,R\in \{0,\ldots,2M\}}\Pb\left(\{|X(t)-R+L|\geq M^{\varepsilon}\}\cap B_{L}\cap D_{R}\right)=0.
\end{align*}
\end{proof}
\section{Acknowledgements}
This work is supported by the Deutsche Forschungsgemeinschaft
(German Research Foundation) by the CRC 1060 (Projektnummer
211504053) and Germany's Excellence Strategy - GZ 2047/1, Projekt ID
390685813. Part of this work was done while the author was affiliated with IST Austria, where his research was supported by ERC Advanced Grant No. 338804 and ERC Starting Grant No. 716117.
\bibliography{Biblio}{}
\bibliographystyle{plain}
\end{document}